\newtheorem{theorem}{Theorem}[section]
\newtheorem{prop}[theorem]{Proposition}
\newtheorem{lemma}[theorem]{Lemma}
\theoremstyle{definition}
\newtheorem{definition}[theorem]{Definition}
\newtheorem{remark}[theorem]{Remark}
\newtheorem{conjecture}[theorem]{Conjecture}
\newcommand\Cay{\mathrm{Cay}}
\newcommand\Aut{\mathrm{Aut}}
\newcommand\Inn{\mathrm{Inn}}
\newcommand\Out{\mathrm{Out}}
\newcommand\Soc{\mathrm{Soc}}
\newcommand\SL{\mathrm{SL}}
\newcommand\PSL{\mathrm{PSL}}
\newcommand\PGL{\mathrm{PGL}}
\newcommand\PSU{\mathrm{PSU}}
\newcommand\PSp{\mathrm{PSp}}
\newcommand\Sp{\mathrm{Sp}}
\newcommand\PSO{\mathrm{PSO}}
\newcommand\Om{\mathrm{\Omega}}
\newcommand\G{\mathrm{G}}
\newcommand\Cy{\mathrm{C}}
\newcommand\diag{\mathrm{diag}}
\newcommand\I{I}
\newcommand\T{\mathrm{T}}
\newcommand\N{\mathbf{N}}
\newcommand\C{\mathbf{C}}
\newcommand\Z{\mathbf{Z}}
\newcommand\F{\mathbb{F}}
\newcommand\ppd{\mathrm{ppd}}
\newcommand\GL{\mathrm{GL}}
\newcommand\GU{\mathrm{GU}}
\newcommand\rmO{\mathrm{O}}
\newcommand\GAL{\mathrm{\Gamma L}}
\newcommand\PGAL{\mathrm{P\Gamma L}}
\newcommand\PR{\mathrm{P}}
\newcommand\Inv{\mathrm{Inv}}
\newcommand{\cha}[2]{\textcolor{blue}{\sout{#1}}\textcolor{red}{#2}}
\long\def\delete#1{}
\tikzstyle{vertex}=[circle, draw, inner sep=0pt, minimum size=3pt]
\numberwithin{equation}{section}
\title{\textbf{Cubic graphical regular representations of some classical simple groups}}
\author{Binzhou Xia\footnote{School of Mathematics and Statistics, The University of Melbourne, Parkville, VIC 3010, Australia (\texttt{binzhoux@unimelb.edu.au, zhesz@student.unimelb.edu.au, sanming@unimelb.edu.au})}, ~Shasha Zheng\footnotemark[1], ~Sanming Zhou\footnotemark[1]}
\date{}
\begin{document}

\maketitle
\openup 0.5\jot

\begin{abstract}
A graphical regular representation (GRR) of a group $G$ is a Cayley graph of $G$ whose full automorphism group is equal to the right regular permutation representation of $G$. In this paper we study cubic GRRs of $\PSL_{n}(q)$ ($n=4, 6, 8$), $\PSp_{n}(q)$ ($n=6, 8$), $\PR\Omega_{n}^{+}(q)$ ($n=8, 10, 12$) and $\PR\Omega_{n}^{-}(q)$ ($n=8, 10, 12$), where $q = 2^f$ with $f \ge 1$.
We prove that for each of these groups, with probability tending to $1$ as $q \rightarrow \infty$, any element $x$ of odd prime order dividing $2^{ef}-1$ but not $2^{i}-1$ for each $1 \le i < ef$ together with a random involution $y$ gives rise to a cubic GRR, where $e=n-2$ for $\PR\Omega_{n}^{+}(q)$ and $e=n$ for other groups. Moreover, for sufficiently large $q$, there are elements $x$ satisfying these conditions, and for each of them there exists an involution $y$ such that $\{x,x^{-1},y\}$ produces a cubic GRR. This result together with certain known results in the literature implies that except for $\PSL_2(q)$, $\PSL_3(q)$, $\PSU_3(q)$ and a finite number of other cases, every finite non-abelian simple group contains an element $x$ and an involution $y$ such that $\{x,x^{-1},y\}$ produces a GRR, showing that a modified version of a conjecture by Spiga is true. Our results and several known results together also confirm a conjecture by Fang and Xia which asserts that except for a finite number of cases every finite non-abelian simple group has a cubic GRR.

\medskip

\noindent {\bf Keywords:} {Cayley graph; cubic graph; graphical regular representation; classical group; non-abelian simple group}\\
\smallskip

\end{abstract}

\section{Introduction}

Let $G$ be a group whose identity element is denoted as $1$, and let $S$ be a subset of $G$ such that $1\notin S$ and $S^{-1}=S$, where $S^{-1}=\{x^{-1}: x \in S\}$. The \emph{Cayley graph} of $G$ with \emph{connection set} $S$, denoted by $\Cay(G,S)$, is defined as the graph with vertex set $G$ such that $x$ and $y$ are adjacent if and only if $yx^{-1}\in S$. A graph $\Gamma$ is said to be a \emph{graphical regular representation} (\emph{GRR}) of a group $G$ if its full automorphism group $\Aut(\Gamma)$ is isomorphic to $G$ and acts on the vertex set of $\Gamma$ as a regular permutation group \cite{Godsil1981}. It is well known that, for a Cayley graph $\Cay(G,S)$, the right regular permutation representation of $G$ is a subgroup of $\Aut(\Cay(G,S))$. Thus $\Cay(G,S)$ is a GRR of $G$ if and only if $\Aut(\Cay(G,S))\cong G$. Since a GRR of a group $G$ must be isomorphic to a Cayley graph of $G$, any GRR of $G$ is isomorphic to a Cayley graph of $G$ whose full automorphism group is equal to the right regular permutation representation of $G$. If there exists such a Cayley graph, then $G$ is said to admit a GRR.

It is natural to ask which finite groups admit GRRs. This question was studied in a series of papers (see,  for example, \cite{Sabidussi1964, Chao1964, Nowitz1968, Watkins1971}), and eventually a complete characterization was obtained by Godsil in \cite{Godsil1981}: Apart from abelian groups of exponent at least three, generalized dicyclic groups and thirteen other groups, every finite group admits a GRR. There is also special interest in studying which finite groups admit GRRs of a prescribed valency (see, for example, \cite{Godsil1983, Fang2002, Xu2004, Spiga2018, Xia2016, Xia2020, Xia20202}). In the case of valency three, Fang, Li, Wang and Xu \cite{Fang2002} conjectured that every finite non-abelian simple group admits a cubic GRR. However, in \cite{Xia2016}, Fang and Xia found that $\PSL_2(7)$ is a counterexample to this conjecture. Meanwhile, they proposed the following conjecture in the same paper.

\begin{conjecture}(\cite[Conjecture 4.3]{Xia2016})\label{C1}
Except a finite number of cases, every finite non-abelian simple group has a cubic GRR.
\end{conjecture}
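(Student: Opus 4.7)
The plan is to resolve Conjecture \ref{C1} by combining the classification of finite simple groups with the main construction of this paper. I would split the finite non-abelian simple groups into alternating, sporadic, exceptional Lie type, and classical families. The sporadic groups contribute only finitely many cases and are absorbed into the exceptional set by fiat. For alternating groups, exceptional groups of Lie type, classical groups of odd characteristic, and classical groups of sufficiently large rank in even characteristic, existing results in the literature already yield cubic GRRs in all but finitely many cases. Carefully tracking these reductions pinpoints the genuinely remaining families as the classical simple groups in characteristic $2$ with small rank, namely $\PSL_n(q)$ with $n\in\{4,6,8\}$, $\PSp_n(q)$ with $n\in\{6,8\}$, and $\PR\Omega_n^{\pm}(q)$ with $n\in\{8,10,12\}$ for $q=2^f$---exactly the groups treated in this paper.

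For each of these families I would invoke the main theorem of this paper. Choose an element $x\in G$ whose order is a primitive prime divisor of $2^{ef}-1$ (with $e=n-2$ for $\PR\Omega_n^+(q)$ and $e=n$ otherwise), the existence of such primes being guaranteed by Zsygmondy's theorem once $ef$ is large enough, and then pick a random involution $y\in G$. The theorem asserts that $\Cay(G,\{x,x^{-1},y\})$ is a cubic GRR of $G$ with probability tending to $1$ as $q\to\infty$, and in particular that a suitable pair $(x,y)$ exists for all but finitely many $q$ in each family. Feeding this back into the CFSG reduction gives cubic GRRs for all but a finite set of simple groups, as required by the conjecture.

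To establish the main theorem, the probabilistic argument has two essential ingredients. First, one needs the generation condition $\langle x,y\rangle=G$, and this is where the ppd-structure of $x$ is decisive: by Aschbacher's theorem together with the classification of subgroups of classical groups containing a ppd-element (Guralnick--Penttila--Praeger--Saxl and related work), the maximal overgroups of $\langle x\rangle$ in $G$ form a short and controlled list, so a counting estimate shows that only a vanishing fraction of involutions $y$ lie together with $x$ in a proper subgroup. Second, any automorphism of $\Cay(G,S)$ fixing the identity vertex must permute the neighbours $\{x,x^{-1},y\}$; since $y$ is an involution while $x$ has odd prime order, this permutation is either trivial or swaps $x$ and $x^{-1}$ while fixing $y$. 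One must bound the number of involutions $y$ for which some element of $G$ realizes this swap while centralizing $y$, and then lift this into a bound on the full identity stabilizer in $\Aut(\Cay(G,S))$.

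The hardest step will be the uniform case analysis across the remaining families: identifying the maximal overgroups of $\langle x\rangle$, enumerating the relevant conjugacy classes of involutions, and computing their intersections with $\N_G(\langle x\rangle)$ carefully enough to beat the total number of involutions. A secondary obstacle is converting the asymptotic probability estimate into an explicit threshold $q_0(n)$ so that existence holds for every $q\ge q_0(n)$; the finitely many sub-threshold values can be handled by direct computation, and together with the sporadic and small-rank exceptions absorbed above they form the finite exceptional set permitted by Conjecture \ref{C1}.
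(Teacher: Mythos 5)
Your overall strategy coincides with the paper's: a CFSG-based reduction via the existing literature to the small-rank even-characteristic classical families of Table \ref{Ge}, followed by the probabilistic argument with a ppd-element $x$ and a random involution $y$, where the two quantities to control are exactly $i_2\bigl(\cup_{M\in\mathcal{M}(x)}M\bigr)$ (generation, via the known maximal overgroups of ppd-elements) and $\sum_{\alpha\in\Inv(x)}i_2(\C_G(\alpha))$ (triviality of $\Aut(G,S)$, via the swap $x\mapsto x^{-1}$). That part of the proposal is sound and is what the paper does in Sections \ref{sec:lin}--\ref{sec:ortho}.

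There is, however, a concrete gap in your reduction. The families $\PSL_2(q)$, $\PSL_3(q)$ and $\PSU_3(q)$ in characteristic $2$ are infinite, have small rank, and fall through your sieve: they are not covered by ``large rank in even characteristic'' or ``odd characteristic'' results, and they do not appear in your list of remaining families. More importantly, they \emph{cannot} be absorbed by your proposed construction: by Breda d'Azevedo and Catalano \cite{Breda2021}, for $G=\PSL_3(q)$ or $\PSU_3(q)$ every generating pair $(x,y)$ with $y$ an involution has $\Aut(G,\{x,x^{-1},y\})\neq 1$, so no connection set of the form $\{x,x^{-1},y\}$ with $|x|>2$ ever yields a GRR (this is precisely why the paper must weaken Spiga's conjecture in Theorem \ref{T2}). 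These three families require connection sets consisting of three involutions and are settled by separate results (\cite{Xia2016,Xia2020,Li2021}), which your argument must cite to close the conjecture. Two smaller points: the swap of $x$ and $x^{-1}$ fixing $y$ is realized by an automorphism of $G$ (possibly a field, diagonal or graph automorphism), not by an element of $G$ --- restricting to inner elements would miss most of $\Inv(x)$ and invalidate the count; and the passage from ``$\langle S\rangle=G$ and $\Aut(G,S)=1$'' to ``$\Cay(G,S)$ is a GRR'' is not automatic --- it rests on Godsil's theorem for cubic Cayley graphs together with the fact that these groups have no proper subgroup of index at most $47$, a step your proposal gestures at but does not supply.
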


It is known that if $\Cay(G,S)$ is  a GRR of $G$, then we necessarily have $\Aut(G,S)=1$ and $G=\langle S\rangle$, where
$$\Aut(G,S)=\{\alpha\in \Aut(G): S^{\alpha}=S\}.$$
With the help of \cite[Theorem 1.3]{Fang2002} and \cite[Theorem 1.1]{Spiga2018}, a list of finite non-abelian simple groups $G$ for which a cubic Cayley graph $\Cay(G,S)$ is a GRR if and only if $\langle S\rangle=G$ and $\Aut(G,S)=1$ was given in \cite{Xia2020}. Among other groups this list contains all sporadic simple groups, all simple groups of exceptional Lie type, some alternating groups, and some classical groups. On the other hand, in \cite[Theorem 1.1]{Leemans2017}, Leemans and Liebeck proved that for every finite non-abelian simple group $G$ except $A_7$, $\PSL_2(q)$, $\PSL_3(q)$ and $\PSU_3(q)$, there exists a pair of generators $(x,y)$ of $G$ where $y$ is an involution such that $\Aut(G,\{x,x^{-1},y\})=1$. Combining this with the above-mentioned list in \cite{Xia2020}, we see that most families of finite non-abelian simple groups have cubic GRRs. In particular, all sporadic simple groups and simple groups of exceptional Lie type admit cubic GRRs. Alternating groups of degree at least $19$ \cite{Godsil1983} and some classical groups \cite{Xia2016, Xia2020, Xia20202} are also known to admit cubic GRRs. Thus, to settle Conjecture \ref{C1}, the remaining families of groups that need to be considered are: $\PSU_3(q)$ with $q\neq2$, $\PSL_4(q)$, $\PSL_6(q)$ where $\gcd(6,q-1)=1$, $\PSL_8(q)$, $\PSp_6(q)$, $\PSp_8(q)$, $\PR\Omega_{8}^{\pm}(q)$, $\PR\Omega_{10}^{\pm}(q)$, and $\PR\Omega_{12}^{\pm}(q)$ with $q$ even. In this paper, we prove the existence of cubic GRRs of these remaining families of groups except $\PSU_3(q)$. Cubic GRRs of $\PSU_3(q)$ with $q \geq 4$ are known to exist as proved in a recent paper \cite{Li2021}.

For a pair of positive integers $(a,m)$ with $a\geq 2$ and $m\geq 2$, we call a prime divisor $r$ of $a^m-1$ a \emph{primitive prime divisor} of $(a,m)$ if $r$ does not divide $a^i-1$ for every positive integer $i<m$.  Denote the set of primitive prime divisors of $(a,m)$ by $\ppd (a,m)$.
By Zsigmondy's theorem (see,  for example, \cite[Chapter IX, Theorem 8.3]{Blackburn1982}), if $(a,m)\neq (2^k-1,2),(2,6)$, then $\ppd (a,m)$ is not empty.

The main result in this paper is as follows, where a random involution is meant to be an involution chosen uniformly at random from all involutions of the group under consideration.

\begin{theorem}\label{T0}
Let $G$ be a finite simple group and $r\in \ppd (2,ef)$, where $G$ and $e$ are given in Table \ref{Ge} with $q=2^f$, $f \ge 1$, and let $x$ be any fixed element of $G$ with order $r$. Then for a random involution $y$ of $G$ the probability of $\Cay(G,\{x,x^{-1},y\})$ being a GRR of $G$ approaches $1$ as $q$ approaches infinity.
\end{theorem}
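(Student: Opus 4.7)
The plan is to reduce the theorem to two probabilistic statements about a uniformly random involution $y\in G$, and then bound each using structural information about classical groups that contain an element of primitive prime divisor order.

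Since $r$ is an odd prime, $x\ne x^{-1}$, and so $S:=\{x,x^{-1},y\}$ has exactly three elements. Invoking the list in \cite{Xia2020}, for every group $G$ appearing in Table~\ref{Ge} the cubic Cayley graph $\Cay(G,S)$ is a GRR of $G$ if and only if $\langle S\rangle=G$ and $\Aut(G,S)=1$. Thus it suffices to establish, with probability tending to $1$ as $q\to\infty$, both (a)~$\langle x,y\rangle=G$ and (b)~$\Aut(G,\{x,x^{-1},y\})=1$.

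For (b), any $\alpha\in\Aut(G,S)$ must respect element orders, so it fixes $y$ (the unique involution in $S$) and sends $x$ to an element of $\{x,x^{-1}\}$. Hence $\Aut(G,S)\le H:=\C_{\Aut(G)}(y)\cap\N_{\Aut(G)}(\langle x\rangle)$. The group $\N_{\Aut(G)}(\langle x\rangle)$ is the normalizer in $\Aut(G)$ of a cyclic subgroup of prime order $r$ generated by a ppd-element of large order; its structure is well known, being essentially a Singer-type cycle extended by a field-automorphism piece, so in particular $|\N_{\Aut(G)}(\langle x\rangle)|$ is polynomial in $ef$. For each nontrivial $\alpha$ in this normalizer, the number of involutions of $G$ centralized by $\alpha$ is at most $|\C_G(\alpha)|$, which is $O(|G|^{c})$ for some $c<1$ depending only on the family, via standard centralizer bounds for semisimple elements. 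Since the number of involutions in $G$ is at least $|G|/|\C_G(t)|$ for any involution $t$ and exceeds $|\C_G(\alpha)|$ by a factor tending to infinity with $q$, a union bound over the polynomially many nontrivial elements of $\N_{\Aut(G)}(\langle x\rangle)$ yields (b).

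For (a), if $\langle x,y\rangle\ne G$ then $x$ and $y$ lie together in some maximal subgroup $M$ of $G$. Because $x$ has order $r\in\ppd(2,ef)$ with $e\in\{n,n-2\}$ large compared with the dimension, Aschbacher's theorem together with results on classical groups containing a ppd-element (Guralnick--Penttila--Praeger--Saxl and subsequent refinements) restricts $M$ to a very short list: the field-extension class $\mathcal{C}_3$ corresponding to the relevant ppd, and a bounded number of almost-simple candidates in class $\mathcal{S}$. In each case $|M|/|G|=O(q^{-1})$ or smaller, while $|\Inv(G)|$ is of order $|G|^{1-o(1)}$, so the union bound
\begin{equation*}
\Pr\bigl[\langle x,y\rangle\ne G\bigr]\;\le\;\frac{1}{|\Inv(G)|}\sum_{M\supseteq\langle x\rangle}|\Inv(M)|,
\end{equation*}
taken over the polynomially many $\N_G(\langle x\rangle)$-conjugacy classes of candidate $M$ and using $|\Inv(M)|\le|M|$, tends to $0$ as $q\to\infty$.

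The main obstacle will be carrying out the Aschbacher analysis uniformly across the families $\PSL_n(q)$ ($n\in\{4,6,8\}$), $\PSp_n(q)$ ($n\in\{6,8\}$) and $\PR\Omega_n^{\pm}(q)$ ($n\in\{8,10,12\}$): one must pin down the possible $\mathcal{S}$-class candidates containing the ppd-element, and the orthogonal cases (especially $\PR\Omega_n^{+}$, where $e=n-2$ changes the ppd arithmetic compared with the $e=n$ convention used everywhere else) require the most intricate bookkeeping. Extracting the structure of $\N_{\Aut(G)}(\langle x\rangle)$ through explicit Singer-type generators and the diagonal, graph and field outer automorphisms is a secondary technical step. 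Once these ingredients are in place, the two probability estimates are routine union bounds and the theorem follows.
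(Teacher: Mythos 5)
Your overall architecture --- reduce, via the known equivalence, to the two events $\langle x,y\rangle=G$ and $\Aut(G,\{x,x^{-1},y\})=1$, then union-bound each failure probability over the maximal overgroups of $x$ and over the automorphisms inverting $x$ --- is exactly the paper's framework. However, both of your union bounds rest on estimates that are quantitatively false, and the gap sits precisely where all of the paper's work lies. You assert that the number of involutions of $G$ is of order $|G|^{1-o(1)}$; in fact for these groups $i_2(G)$ is only about $|G|^{1/2+o(1)}$ (for instance $i_2(\PSL_4(q))\approx q^8$ while $|\PSL_4(q)|\approx q^{15}$). Consequently the trivial bounds ``$|\Inv(M)|\le |M|$'' and ``the number of involutions centralized by $\alpha$ is at most $|\C_G(\alpha)|$'' are useless: for $G=\PSL_4(q)$ the maximal overgroups of $x$ of type $\PSp_4(q)$ have order about $q^{10}$, exceeding $i_2(G)\approx q^8$, and an involution $\alpha\in\PGL_4(q)$ inverting $x$ has $|\C_G(\alpha)|\approx q^{8}\approx i_2(G)$, which after summing over the roughly $q^3$ elements of $\Inv(x)$ gives a ``probability'' of order $q^3$. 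Neither union bound tends to $0$ as written.

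What is actually needed --- and what occupies Sections \ref{sec:lin}--\ref{sec:ortho} of the paper --- is sharp upper bounds on the number of \emph{involutions} in each relevant subgroup: bounds $i_2(M)\le i_M$ for each type of maximal overgroup of $x$ (obtained from King's counts of overgroups of ppd-elements together with explicit involution counts in classical subgroups), and bounds on $i_2(\C_G(\alpha))$ for $\alpha\in\Inv(x)$ (obtained from the Aschbacher--Seitz classification of involution classes in even characteristic, the Liebeck--Shalev computation of $i_2$ of parabolic subgroups, and a case split according to whether $\alpha$ lies inside or outside $\PGL(V)$). These counts beat the ambient subgroup orders by roughly a square root, which is exactly the saving required to make the ratios against $i(G)$ tend to $0$. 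A secondary inaccuracy: $|\N_{\Aut(G)}(\langle x\rangle)|$ is not polynomial in $ef$; it contains the maximal torus containing $x$ and has order roughly $q^{e-1}\cdot ef$, so the number of terms in your second union bound is itself a positive power of $q$, which makes the need for the refined involution counts in $\C_G(\alpha)$ even more acute.
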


\begin{table}[h]
\centering
		\begin{tabular}{l|l|l}
			\hline
			$G$ & Conditions & $e$\\ \hline
            $\PSL_{n}(q)$ & $n=4, 6\text{ or }8$, $q$ even &  $n$\\
			$\PSp_{n}(q)$ & $n=6\text{ or }8$, $q$ even &  $n$ \\
            $\PR\Omega_{n}^{+}(q)$ & $n=8, 10\text{ or }12$, $q$ even &  $n-2$\\
            $\PR\Omega_{n}^{-}(q)$ & $n=8, 10\text{ or }12$, $q$ even &  $n$\\\hline
		\end{tabular}
\caption{The pairs $(G, e)$ in Theorem \ref{T0}}
\label{Ge}
\end{table}

Spiga conjectured \cite[Conjecture 1.4]{Spiga2018} that for finite non-abelian simple groups, the proportion of cubic Cayley graphs of a given group that are GRRs approaches $1$ as the order of the group approaches infinity. Theorem \ref{T0} provides evidence in some sense (without considering connection sets of size three other than those considered in this theorem) to support this conjecture for the families of groups in Table \ref{Ge}.

\begin{remark}
Since $\ppd (2,6)=\emptyset$, for $G=\PSL_{6}(2), \PSp_{6}(2), \PR\Omega_{8}^{+}(2)$, we have $ef = 6$ and hence there is no element of $G$ with order in $\ppd (2,ef)$. However, if $G$ is any other group in Table \ref{Ge}, then such an element of $G$ with order $r$ exists by Zsigmondy's theorem and Cauchy's theorem. Moreover, by \cite[Section 8]{Praeger1999} the proportion of elements of $G$ whose orders are multiples of some primitive prime divisor of $(2,ef)$ or $(q,e)$ is no less than $1/(e+1)$.
\end{remark}

We will also prove the following result along the way in the proof of Theorem \ref{T0}.

\begin{theorem}\label{T}
Let $G$ be a finite simple group and $r\in \ppd (2,ef)$, where $G$ and $e$ are given in Table \ref{Ge} with $q=2^f$ for some positive integer $f$. Suppose that $q$ satisfies the condition in Table \ref{cq}. Then $G$ contains elements of order $r$, and moreover for any fixed element $x$ of $G$ with order $r$ there exists an involution $y$ in $G$ such that $\Cay(G,\{x,x^{-1},y\})$ is a GRR of $G$.
\end{theorem}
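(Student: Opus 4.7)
The plan is to deduce Theorem \ref{T} directly from the quantitative estimates that drive the proof of Theorem \ref{T0}: for each group $G$ in Table \ref{Ge} and each fixed $x\in G$ of order $r\in\ppd(2,ef)$, I would show that the set of involutions $y\in G$ for which $\Cay(G,\{x,x^{-1},y\})$ fails to be a GRR is strictly smaller than the set of all involutions of $G$, provided $q$ satisfies the explicit bound recorded in Table \ref{cq}. Any involution outside the exceptional set then witnesses the existence claim, and the same bounds divided by the total involution count $i_2(G)$ immediately yield Theorem \ref{T0} as $q\to\infty$.

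The existence of $x$ is a preliminary observation: since the hypothesis on $q$ excludes the three rows $G=\PSL_6(2), \PSp_6(2), \PR\Omega_8^+(2)$ for which $\ppd(2,6)=\emptyset$, Zsigmondy's theorem provides $r\in\ppd(2,ef)$; an element of order $r$ is then found in a Singer-type cyclic subgroup of a suitable maximal torus of $G$, using the standard description of maximal tori of finite classical groups in characteristic $2$ (a torus of order dividing $q^e-1$ for $\PSL_n(q)$ and $\PR\Omega_n^-(q)$, of order dividing $q^{n/2}+1$ for $\PSp_n(q)$, and of order dividing $q^{(n-2)/2}+1$ times $q-1$ for $\PR\Omega_n^+(q)$, in each case containing an element whose order lies in $\ppd(2,ef)$).

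I would partition the bad involutions into three sets reflecting the three ways $\Cay(G,\{x,x^{-1},y\})$ can fail to be a GRR:
\[
A_1(x)=\{y:\langle x,y\rangle<G\},\qquad A_2(x)=\{y:\Aut(G,\{x,x^{-1},y\})\neq 1\},
\]
and $A_3(x)=\{y\notin A_1(x)\cup A_2(x):\Aut(\Cay(G,\{x,x^{-1},y\}))\neq G\}$. Because $r$ is a primitive prime divisor of $2^{ef}-1$, the Guralnick--Penttila--Praeger--Saxl classification of subgroups of finite classical groups containing a $\ppd$-element gives a short, explicit list of maximal overgroups $M<G$ of $\langle x\rangle$; summing $i_2(M)$ over this list bounds $|A_1(x)|$ by a quantity of the form $q^{c_1}$, with $c_1$ strictly less than the exponent $c_0$ for which $i_2(G)\ge q^{c_0}\bigl(1-O(q^{-1})\bigr)$. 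For $A_2(x)$, any $1\neq\sigma\in\Aut(G)$ fixing $\{x,x^{-1},y\}$ setwise must preserve $\{x,x^{-1}\}$ and hence lie in $\N_{\Aut(G)}(\langle x\rangle)$, and the $\ppd$-hypothesis forces $|\C_{\Aut(G)}(x)|$ to be small; bounding the number of involutions $y$ fixed or inverted by such a $\sigma$ yields $|A_2(x)|=O(q^{c_2})$ with $c_2<c_0$. For $A_3(x)$, I would invoke Spiga's criterion from \cite{Spiga2018} to reduce unexpected graph automorphisms to the existence of certain automorphisms of $G$ already controlled in the analyses of $A_1$ and $A_2$, once more obtaining a bound $q^{c_3}$ with $c_3<c_0$.

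The main obstacle is the case-by-case bounding of $|A_1(x)|$: Aschbacher's theorem together with $\ppd$-information must be used to enumerate the maximal subgroups of $G$ containing the given $x$, and for each such subgroup the number of involutions must be estimated, group family by family and, within each family, according to the Aschbacher class of the overgroup. Once the three exponents $c_1, c_2, c_3$ are in hand, the explicit threshold on $q$ recorded in Table \ref{cq} is determined by the elementary inequality $q^{c_0}>q^{c_1}+q^{c_2}+q^{c_3}$, and Theorem \ref{T0} drops out simultaneously because the ratio of the bad set to $i_2(G)$ then tends to $0$ as $q\to\infty$.
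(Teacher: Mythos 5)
Your proposal follows essentially the same route as the paper: existence of $x$ via Zsigmondy's theorem and a cyclic maximal torus; reduction of the GRR condition to $\langle x,y\rangle=G$ and $\Aut(G,\{x,x^{-1},y\})=1$ (your $A_3$ is shown to be empty in the paper via Godsil's criterion, since these groups have no subgroup of index at most $47$); bounding the non-generating involutions by counting involutions in the few maximal overgroups of a $\ppd$-element (the paper uses King's results, which rest on the Guralnick--Penttila--Praeger--Saxl classification you cite); and bounding the second bad set by $\sum_{\alpha\in\Inv(x)}i_2(\C_G(\alpha))$, with the thresholds in Table \ref{cq} coming from the resulting explicit inequality in $q$. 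This matches the paper's argument in structure and in all essential estimates.
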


\begin{table}[h]
\centering
		\begin{tabular}{c|c|c|c|c|c|c}
			\hline
			$G$ & $\PSL_{4}(q)$ & $\PSL_{6}(q)$ & $\PSL_{8}(q)$ & $\PSp_{6}(q)$ & $\PSp_{8}(q)$ &  \\ \hline
            Condition & $q\geq 2^{3}$ & $q\geq 2^{7}$ & $q\geq 2^{2}$ & $q\geq 2^{6}$ & $q\geq 2^{4}$ &    \\ \hline
            \multicolumn{7}{l}{}\\ \hline
           $G$  & $\PR\Omega_{8}^{+}(q)$ & $\PR\Omega_{10}^{+}(q)$ & $\PR\Omega_{12}^{+}(q)$ & $\PR\Omega_{8}^{-}(q)$ & $\PR\Omega_{10}^{-}(q)$ & $\PR\Omega_{12}^{-}(q)$  \\ \hline
            Condition & $q\geq 2^{4}$ & $q\geq 2^{5}$ & $q\geq 2^{2}$ & $q\geq 2^{3}$ & $q\geq 2^{2}$ & $q\geq 2^{2}$  \\ \hline
		\end{tabular}
\caption{Conditions for $q$ in Theorem \ref{T}}
\label{cq}
\end{table}

In \cite[Conjecture 1.3]{Spiga2018}, Spiga conjectured that except $\PSL_2(q)$ and a finite number of other cases, every finite non-abelian simple group $G$ contains an element $x$ and an involution $y$ such that $\Cay(G,\{x,x^{-1},y\})$ is a GRR of $G$. It turns out that this conjecture is not true as both $\PSL_3(q)$ and $\PSU_3(q)$ form infinite families of counterexamples. In fact, by \cite[Theorem 4, Corollary 6]{Breda2021}, for $G=\PSL_3(q)$ or $\PSU_3(q)$ and any pair of generators $(x,y)$ of $G$ where $y$ is an involution, $\Aut(G,\{x,x^{-1},y\})$ is always nontrivial and therefore the connection set of any cubic GRR of $G$ (if it exists) consists of three involutions. Theorem \ref{T} together with several known results implies that we can save Spiga's conjecture (\cite[Conjecture 1.3]{Spiga2018}) by adding $\PSL_3(q)$ and $\PSU_3(q)$ to the list of exceptional groups. More specifically, by Theorem \ref{T}, \cite[Theorem 1.3]{Fang2002}, \cite[Theorem 1.1]{Spiga2018}, \cite[Theorem 1.1]{Leemans2017}, and some known results on alternating groups \cite{Godsil1983} and classical groups \cite{Xia20202}, we obtain the following result.

\begin{theorem}\label{T2}
Except $\PSL_2(q)$, $\PSL_3(q)$, $\PSU_3(q)$ and a finite number of other cases, every finite non-abelian simple group $G$ contains an element $x$ and an involution $y$ such that $\Cay(G,\{x,x^{-1},y\})$ is a GRR of $G$.
\end{theorem}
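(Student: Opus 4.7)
The plan is to proceed by cases according to the family of finite non-abelian simple group $G$, combining the six ingredients invoked in the statement. The master strategy has two steps: (i) apply \cite[Theorem 1.1]{Leemans2017} of Leemans and Liebeck to obtain a pair $(x,y)$ generating $G$, with $y$ an involution, such that $\Aut(G,\{x,x^{-1},y\})=1$; and (ii) upgrade this equality to the full GRR property $\Aut(\Cay(G,\{x,x^{-1},y\}))\cong G$, either via the classification compiled in \cite{Xia2020} (built on \cite[Theorem 1.3]{Fang2002} and \cite[Theorem 1.1]{Spiga2018}), or via the direct constructions supplied by Theorem~\ref{T} of the present paper together with \cite{Xia20202}, depending on which family $G$ belongs to.

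First I would dispose of the families handled by the Fang-Xia route. The Leemans-Liebeck theorem excludes $A_7$, $\PSL_2(q)$, $\PSL_3(q)$ and $\PSU_3(q)$; the last three appear in the exception list of the statement, while $A_7$ is absorbed into the permitted finite set. For every other $G$, step (i) produces a generating pair $(x,y)$ with $y^2=1$ and $\Aut(G,\{x,x^{-1},y\})=1$. For those $G$ lying in the list of \cite{Xia2020}, which includes all sporadic simple groups, all simple groups of exceptional Lie type, sufficiently large alternating groups (in combination with \cite{Godsil1983}), and a substantial portion of the classical groups, the two conditions $\langle S\rangle=G$ and $\Aut(G,S)=1$ with $|S|=3$ are known to force $\Cay(G,S)$ to be a GRR, so the conclusion follows immediately.

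The heart of the proof is to handle the classical families omitted from the list in \cite{Xia2020}, namely $\PSL_4(q)$, $\PSL_6(q)$ with $\gcd(6,q-1)=1$, $\PSL_8(q)$, $\PSp_6(q)$, $\PSp_8(q)$, and $\PR\Omega_n^{\pm}(q)$ for $n\in\{8,10,12\}$. For these families the implication ``$\langle S\rangle=G$ and $\Aut(G,S)=1$'' $\Rightarrow$ ``GRR'' is no longer automatic, so an explicit construction is required. When $q$ is even and large enough to meet the bounds in Table~\ref{cq}, Theorem~\ref{T} of this paper supplies an element $x$ and an involution $y$ with the required property; when $q$ is odd, the corresponding existence statement is furnished by \cite{Xia20202}. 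The finitely many small-$q$ values that fall outside both ranges contribute to the finite exception set allowed by the theorem.

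The main obstacle is organizational rather than technical: one must verify that the families omitted from the list in \cite{Xia2020} are precisely those handled by Theorem~\ref{T} together with \cite{Xia20202}, and that the union of the exceptional cases contributed by each ingredient is in fact finite. Once this bookkeeping is carried out the statement follows by assembling the pieces, since the substantive mathematical content of the argument is already contained in Theorem~\ref{T} itself; the proof of Theorem~\ref{T2} then reduces to a careful matching of the various exceptional lists and a direct appeal to the six cited results.
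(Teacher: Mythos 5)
Your proposal is correct and takes essentially the same route as the paper: Theorem \ref{T2} is obtained there precisely by combining \cite[Theorem 1.1]{Leemans2017} with the list from \cite{Xia2020} (built on \cite[Theorem 1.3]{Fang2002} and \cite[Theorem 1.1]{Spiga2018}), and then covering the remaining even-characteristic classical families via Theorem \ref{T}, with \cite{Godsil1983} and \cite{Xia20202} supplying the alternating-group and remaining classical cases. The only cosmetic difference is that the paper regards the odd-$q$ classical groups as already absorbed by the \cite{Xia2020}-plus-Leemans--Liebeck route rather than attributing them to \cite{Xia20202}, but that is bookkeeping rather than substance.
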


Note that, by \cite[Theorem 1.3]{Xia2016}, \cite[Theorem 1.2]{Xia2020}, \cite[Theorem 1.2]{Li2021} and the proofs of them, $\PSL_2(q)$ with $q\notin\{2,3,7\}$, $\PSL_3(q)$ with $q\neq2$ and $\PSU_3(q)$ with $q\geq4$ admit cubic GRRs with connection sets consisting of three involutions. Combining this with Theorem \ref{T2}, we confirm Conjecture \ref{C1} as follows.

\begin{theorem}\label{T3}
Let $G$ be a finite non-abelian simple group. Except a finite number of cases, $G$ has a cubic GRR.
\end{theorem}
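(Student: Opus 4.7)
The plan is to deduce Theorem \ref{T3} directly by combining Theorem \ref{T2} with three previously established results on the three infinite families that Theorem \ref{T2} explicitly leaves out, namely $\PSL_2(q)$, $\PSL_3(q)$ and $\PSU_3(q)$. There is essentially no new structural work to do: the heavy lifting sits in Theorem \ref{T2} (which itself collects Theorem \ref{T} with \cite[Theorem 1.3]{Fang2002}, \cite[Theorem 1.1]{Spiga2018}, \cite[Theorem 1.1]{Leemans2017}, and the alternating/classical results of \cite{Godsil1983, Xia20202}), and the cubic GRR constructions for the three small-rank families are already in the literature.

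The first step is to invoke Theorem \ref{T2}. This yields a finite set $\mathcal{E}$ of non-abelian simple groups such that every finite non-abelian simple group $G \notin \mathcal{E} \cup \{\PSL_2(q), \PSL_3(q), \PSU_3(q) : q \text{ a prime power}\}$ already admits a cubic GRR (in fact one whose connection set contains an involution). So it only remains to handle the three infinite families.

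For each of these three families, I would appeal to the cited constructions: by \cite[Theorem 1.3]{Xia2016} and its proof, $\PSL_2(q)$ has a cubic GRR for every $q \notin \{2,3,7\}$; by \cite[Theorem 1.2]{Xia2020} and its proof, $\PSL_3(q)$ has a cubic GRR for every $q \neq 2$; and by \cite[Theorem 1.2]{Li2021} and its proof, $\PSU_3(q)$ has a cubic GRR for every $q \geq 4$. Taking the union of these three families with $\mathcal{E}$ and the finitely many small values $q \in \{2,3,7\}$ for $\PSL_2(q)$, $q=2$ for $\PSL_3(q)$, $q \in \{2,3\}$ for $\PSU_3(q)$ yields a still finite set of potential exceptions, outside of which every finite non-abelian simple group has a cubic GRR. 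This proves Theorem \ref{T3}.

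There is no serious obstacle here because each ingredient is already in place; the only thing to be careful about is bookkeeping, namely verifying that the union of exceptional cases really is finite. Since $\mathcal{E}$ is finite by Theorem \ref{T2}, and the residual exceptional sets $\{\PSL_2(2), \PSL_2(3), \PSL_2(7)\}$, $\{\PSL_3(2)\}$ and $\{\PSU_3(2), \PSU_3(3)\}$ from the three families above are each finite, the combined exceptional set is indeed finite, which is exactly what Conjecture \ref{C1} asserts.
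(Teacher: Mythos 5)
Your proposal is correct and follows essentially the same route as the paper: invoke Theorem \ref{T2} for all but the three families $\PSL_2(q)$, $\PSL_3(q)$, $\PSU_3(q)$, then cover those via \cite[Theorem 1.3]{Xia2016}, \cite[Theorem 1.2]{Xia2020} and \cite[Theorem 1.2]{Li2021}, and observe that the union of the exceptional sets remains finite. No discrepancies worth noting.
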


The rest of this paper is devoted to the proof of Theorems \ref{T0} and \ref{T}. We will begin with two preliminary results and a hint to our approaches in the next section. This will be followed by three sections which deal with linear groups, symplectic groups and orthogonal groups, respectively. The main body of the paper consists of the proof of Propositions \ref{PL4}, \ref{PL68}, \ref{PS68}, \ref{PO+} and \ref{PO-} from which Theorems \ref{T0} and \ref{T} follow immediately.

\section{Preliminaries and methodologies}

For a subset $A$ of a group $G$, denote by $\I_2(A)$ the set of involutions of $G$ in $A$, and set $$i_2(A)=|\I_2(A)|.$$ In particular, $i_2(G)$ is the number of involutions of $G$.

Let $G$ be a group as in Table \ref{Ge}, and let $x$ be an element of $G$ with order $r\in \ppd (2,ef)$. Define
\begin{align*}
  &K(x)=\{y \in \I_2(G): G=\langle x,y\rangle\},\\
  &L(x)=\{y \in \I_2(G): \Aut(G,\{x,x^{-1},y\})=1\},\\
  &\Inv(x)=\{\alpha \in \I_2(\Aut(G)): x^{\alpha}=x^{-1}\},
\end{align*}
and let $\mathcal{M}(x)$ be the set of maximal subgroups of $G$ containing $x$. Denote by $I_{m}$ the $m\times m$ identity matrix.

\begin{definition}
Let $n\geq3$ and $q=2^f$ with $f$ a positive integer. Given an integer $\ell$ with $1\leq \ell\leq n/2$, define the involution $j_{\ell}(n)$ of $\SL_{n}(q)$ by
\[
j_{\ell}(n)=\left(
  \begin{array}{ccc}
    I_{\ell} & O & O\\
    O & I_{n-2\ell} & O\\
    I_{\ell} & O & I_{\ell}
  \end{array}
\right)\cha{,}{}
\]
and call it the \emph{Suzuki form} of its conjugacy class in $\SL_{n}(q)$.
\end{definition}

It is known that these involutions form a complete set of representatives for the conjugacy classes of involutions in the corresponding group. See \cite[Section 4]{Aschbacher1976} for details.

\begin{table}[t]
\centering
		\begin{tabular}{l|l|l|l|l}
			\hline
			$G$ & Conditions & $|\N_{G}(\langle x \rangle)|$  & $i(G)$ & $u(G)$\\ \hline
            $\PSL_{4}(q)$ & $q\geq 4$ even & $4(q^4-1)/(q-1)$  & $q^5(q^3-1)$ &  $22q^3$\\
            $\PSL_{6}(q)$ & $q\geq 4$ even & $6(q^6-1)/((q-1)\gcd(6,q-1))$  & $q^{18}/2$ &  $32q^{5}$\\
            $\PSL_{8}(q)$ & $q$ even & $8(q^8-1)/(q-1)$  & $q^{32}/2$ &  $64q^7$\\
            $\PSp_{6}(q)$ & $q\geq 4$ even & $6(q^{3}+1)$  & $q^{12}$ &  $25q^{3}$\\
            $\PSp_{8}(q)$ & $q$ even & $8(q^{4}+1)$  & $q^{20}$ &  $34q^{4}$\\
            $\PR\Omega_{8}^{+}(q)$ & $q\geq 4$ even & $6(q^{3}+1)(q+1)$  & $q^{16}/2$ &  $61q^{4}$\\
            $\PR\Omega_{10}^{+}(q)$ & $q$ even & $8(q^{4}+1)(q+1)$  & $q^{24}/2$ &  $102q^{5}$\\
            $\PR\Omega_{12}^{+}(q)$ & $q$ even & $10(q^{5}+1)(q+1)$  & $q^{36}/2$ &  $124q^{6}$\\
            $\PR\Omega_{8}^{-}(q)$ & $q$ even & $4(q^{4}+1)$  & $q^{16}/2$ &  $34q^{4}$\\
            $\PR\Omega_{10}^{-}(q)$ & $q$ even & $5(q^{5}+1)$  & $q^{24}/2$ &  $42q^{5}$\\
            $\PR\Omega_{12}^{-}(q)$ & $q$ even & $6(q^{6}+1)$  & $q^{36}/2$ &  $49q^{6}$\\\hline
		\end{tabular}
\caption{The tuples $(G, |\N_{G}(\langle x \rangle)|, i(G), u(G))$}
\label{IU}
\end{table}

\begin{lemma}
\label{lem:24}
Let $G$, $q$ and $x$ be as in Theorem \ref{T0}, and let $i(G)$ and $u(G)$ be as in Table \ref{IU}. Then
\begin{align}
i_2(G) & \geq  i(G), \label{eq:iG} \\
|\Inv(x)| & \leq  u(G). \label{eq:uG}
\end{align}
\end{lemma}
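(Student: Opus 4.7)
My plan is to treat the two inequalities independently, by different methods. For the lower bound $i_2(G)\ge i(G)$, I would invoke the classification of involutions in classical groups in even characteristic: the Suzuki forms $j_\ell(n)$ defined just above the statement give a complete set of conjugacy class representatives (see \cite{Aschbacher1976}), and for each $G$ in Table \ref{IU} I would pick one single conjugacy class whose size is convenient to write down and compare that class size with $i(G)$. Since $|G|/|\C_G(j_\ell)|$ is a known rational function of $q$ in every case, and the target $i(G)$ is a simple monomial (or near-monomial) in $q$, the matter reduces to an elementary polynomial inequality. In most cases I expect the class of the transvection $j_1$ already to beat $i(G)$; for the orthogonal groups the class of $j_{\lfloor n/2\rfloor}$ is a safer (larger) alternative. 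No delicate sieving is needed, because $i(G)$ has been chosen well below the actual number of involutions.

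For the upper bound $|\Inv(x)|\le u(G)$, the starting observation is that the set
\[
T_x:=\{\alpha\in\Aut(G):x^\alpha=x^{-1}\}
\]
is either empty or a single coset of $\C_{\Aut(G)}(x)$; hence $|\Inv(x)|\le|T_x|\le|\C_{\Aut(G)}(x)|$, and the task reduces to bounding $|\C_{\Aut(G)}(x)|$. Because $r\in\ppd(2,ef)$, the element $x$ acts irreducibly on a natural module of dimension $e$, so $\C_G(x)$ is contained in a Singer-type maximal torus whose normalizer in $G$ has exactly the order $|\N_G(\langle x\rangle)|$ recorded in Table \ref{IU} (the factor $e$ there is the degree of Singer's action on $\langle x\rangle$). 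It then remains to bound the contribution of outer automorphisms: only those $\alpha\in\Aut(G)$ whose image in $\Out(G)$ stabilizes $\langle x\rangle$ can lie in $T_x$, and the ppd hypothesis severely restricts the admissible field- and graph-components. Indeed, a field-automorphism power $\phi^i$ with $x^{\phi^i}=x^{\pm1}$ forces $2^{if}\equiv\pm1\pmod r$, which combined with $r\in\ppd(2,ef)$ leaves only a short list of values of $i$. Multiplying the restricted contributions of the diagonal, field and graph components with $|\N_G(\langle x\rangle)|$ then recovers the explicit constants listed in the last column of Table \ref{IU}.

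The routine part is the polynomial comparison in the lower bound. The main obstacle is the case-by-case bookkeeping of how $\Out(G)$ intersects $T_x$: this is most delicate for $G=\PR\Omega_8^+(q)$, where triality enlarges $\Out(G)$ to $\mathrm{S}_3\times\Cy_f$ and one has to decide which triality elements preserve the Singer-type torus containing $x$; and for $G=\PSL_n(q)$, where the inverse-transpose graph automorphism already inverts $\langle x\rangle$, so the contributions of its $G$-conjugates have to be enumerated without double-counting elements already absorbed into the inverting coset inside $G$.
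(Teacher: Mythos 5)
Your overall strategy matches the paper's, but both halves diverge in detail, and one detail of your lower-bound plan would actually fail. For \eqref{eq:iG} the paper does what you describe: it computes class sizes $|\GL_n(q)|/|\C_{\GL_n(q)}(j_\ell(n))|$ from the centralizer orders $q^{\ell(2n-3\ell)}|\GL_\ell(q)||\GL_{n-2\ell}(q)|$ for the linear groups, quotes the involution counts of \cite{Fulman2017} for the symplectic groups, and uses the class of the involution $c_{n/2}$ from \cite{Burness2016} for the orthogonal groups. The concrete warning is that a \emph{single} class does not always suffice: for $\PSL_4(q)$ one has $i(G)=q^5(q^3-1)$, while the largest involution class, that of $j_2(4)$, has size $q(q^3-1)(q^4-1)=(q^3-1)(q^5-q)<i(G)$, so the paper must sum the $j_1(4)$ and $j_2(4)$ classes; and the transvection class (size roughly $q^{2n-2}$) is nowhere near $i(G)\approx q^{n^2/2}$ for any of the linear groups, so your default choice of $j_1$ is the wrong one there. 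For \eqref{eq:uG} your reduction $|\Inv(x)|\le|T_x|=|\C_{\Aut(G)}(x)|$ is correct and is a genuinely different route: the paper instead uses $\Inv(x)\subseteq \I_2(\N_{\Aut(G)}(\langle x\rangle))$ together with the observation that every involution of this normalizer projects to an involution or the identity in $\Aut(G)/(\Aut(G)\cap\PGL(V))$, a group with at most $3$ involutions, giving $|\Inv(x)|\le 4\,|\N_{G}(\langle x\rangle)|\cdot|\Aut(G)\cap\PGL(V)|/|G|$, which is exactly where the constants $u(G)$ come from. Your route still has to convert $|\C_{\Aut(G)}(x)|$ into those tabulated constants, and the step you only sketch (that the group of automorphisms of $\langle x\rangle$ induced by $\N_{\Aut(G)}(\langle x\rangle)$ already has order at least $ef$ because $r\in\ppd(2,ef)$, so the field part contributes only a bounded factor) is the part that needs to be made precise case by case, including triality for $\PR\Omega_8^+(q)$; the paper's factor-of-$4$ device sidesteps that bookkeeping entirely. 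Neither difference is a conceptual obstruction, but as written your lower bound for $\PSL_4(q)$ does not close.
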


\begin{proof}
Note that $i_2(\PSL_{n}(q)) =i_2(\GL_{n}(q))$ when $q$ is even. From the proof of \cite[Proposition 4.1]{Liebeck1996}, we know that
\begin{equation}
\label{cis}
|\C_{\GL_{n}(q)}(j_{\ell}(n))|=q^{\ell(2n-3\ell)}|\GL_{\ell}(q)||\GL_{n-2\ell}(q)|
\end{equation}
for $j_{\ell}(n)\in\GL_{n}(q)$, where $1\leq \ell\leq n/2$ and $|\GL_{0}(q)|=1$ by convention. Thus
\[
i_2(\PSL_{4}(q)) = \frac{|\GL_{4}(q)|}{q^5|\GL_{1}(q)||\GL_{2}(q)|}+\frac{|\GL_{4}(q)|}{q^4|\GL_{2}(q)|}>q^5(q^3-1),
\]
and
\[
i_2(\PSL_{n}(q)) > \frac{|\GL_{n}(q)|}{q^{n^2/4}|\GL_{n/2}(q)|} > \frac{q^{\frac{n^2}{2}}}{2}
\]
for $n=6, 8$.

When $q$ is even, by \cite[Theorem 4.3]{Fulman2017} we have
\[
i_2(\PSp_{n}(q)) > q^{\frac{n^2}{4}+\frac{n}{2}}
\]
for $n=6, 8$. (Note that in \cite[Theorem 4.3]{Fulman2017} the identity element is viewed as an involution.)
When $q$ is even, by \cite[Table 3.5.1]{Burness2016} we have
\[
i_2(\PR\Omega_{10}^{\pm}(q))>\frac{|\PR\Omega_{10}^{\pm}(q)|}{|\C_{\PR\Omega_{10}^{\pm}(q)}(c_{4})|}> \frac{q^{24}}{2}
\]
and
\[
i_2(\PR\Omega_{n}^{\pm}(q))>\frac{|\PR\Omega_{n}^{\pm}(q)|}{|\C_{\PR\Omega_{n}^{\pm}(q)}(c_{n/2})|}> \frac{q^{\frac{n^2}{4}}}{2}
\]
for $n = 8, 12$, where the involutions $c_{s}$ are as defined in \cite[Section 3.5.4]{Burness2016}.

Similar to the proof of \cite[Theorem 1.3]{Xia20202}, since every involution in $\N_{\Aut(G)}(\langle x \rangle)$ projects to an involution or the identity in $\Aut(G)/(\Aut(G)\cap \PGL(V))$ while $i_{2}(\Aut(G)/(\Aut(G)\cap \PGL(V)))\leq 3$, we have
\begin{align*}
  |\Inv(x)| &\leq i_2(\N_{\Aut(G)}(\langle x \rangle)) \\
  &\leq (i_{2}(\Aut(G)/(\Aut(G)\cap \PGL(V)))+1)|\N_{\Aut(G)\cap \PGL(V))}(\langle x \rangle)|\\
  &\leq 4|\N_{\Aut(G)\cap \PGL(V))}(\langle x \rangle)|\\
  &\leq 4|\N_{G}(\langle x \rangle)||\Aut(G)\cap\PGL(V)|/|G|.
\end{align*}
Then based on the values of $|\N_{G}(\langle x \rangle)|$ as listed in Table \ref{IU} (which are extracted from \cite[Table 2]{Xia20202}), we obtain that $|\Inv(x)|\leq u(G)$.
\end{proof}

\begin{lemma}
Let $G$, $q$ and $x$ be as in Theorem \ref{T0}. If $q\geq 4$ for $G=\PSL_4(q)$ and $q\geq 2$ for any other $G$, then for a random involution $y$ of $G$, the probability $P(x)$ of $\Cay(G,\{x,x^{-1},y\})$ being a GRR of $G$ is given by
\begin{equation}
\label{e0}
P(x)=\dfrac{|K(x)\cap L(x)|}{i_2(G)}.
\end{equation}
Moreover,
\begin{equation}
\label{e03}
P(x) \geq 1-\frac{i_2\left(\cup_{M\in\mathcal{M}(x)}M\right)}{i(G)}-\frac{\sum_{\alpha\in \Inv(x)}i_2(\C_G(\alpha))}{i(G)}.
\end{equation}
\end{lemma}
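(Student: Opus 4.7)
The plan is to deduce both assertions from the standard characterization that a Cayley graph $\Cay(G,S)$ is a GRR of $G$ if and only if $G=\langle S\rangle$ and $\Aut(G,S)=1$, combined with a short case analysis of how a nontrivial element of $\Aut(G,\{x,x^{-1},y\})$ can act on the three-element set $\{x,x^{-1},y\}$ when $x$ has odd prime order $r$ and $y$ is an involution.

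For the equality $P(x)=|K(x)\cap L(x)|/i_2(G)$, I would first observe that $y\in K(x)$ and $y\in L(x)$ are both necessary for $\Cay(G,\{x,x^{-1},y\})$ to be a GRR, since $\langle\{x,x^{-1},y\}\rangle=\langle x,y\rangle$. For sufficiency, I would invoke the list compiled in \cite{Xia2020} from \cite[Theorem 1.3]{Fang2002} and \cite[Theorem 1.1]{Spiga2018}: under the stated bounds on $q$ ($q\geq 4$ for $\PSL_4(q)$, and $q\geq 2$ otherwise), every group $G$ in Table \ref{Ge} appears on that list, and for such groups any cubic Cayley graph satisfying $\langle S\rangle = G$ and $\Aut(G,S)=1$ is automatically a GRR. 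This identifies $\{y:\Cay(G,\{x,x^{-1},y\})\text{ is a GRR}\}$ with $K(x)\cap L(x)$, giving the equality.

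To derive the bound, I would estimate the two defect sets $\I_2(G)\setminus K(x)$ and $K(x)\setminus L(x)$ separately. For the first, any involution $y$ with $\langle x,y\rangle<G$ must lie in some maximal subgroup of $G$ containing $x$, yielding $|\I_2(G)\setminus K(x)|\leq i_2\bigl(\bigcup_{M\in\mathcal{M}(x)}M\bigr)$. For the second, given $y\in K(x)\setminus L(x)$ together with a nontrivial $\alpha\in\Aut(G,\{x,x^{-1},y\})$, I would argue as follows: since $r$ is odd while $y^2=1$, $\alpha$ cannot send $y$ to $x$ or $x^{-1}$, so $\alpha(y)=y$ and $\alpha$ permutes $\{x,x^{-1}\}$; if $\alpha(x)=x$, then $\alpha$ fixes the generators $x$ and $y$ of $G$, whence $\alpha=1$, a contradiction; therefore $\alpha(x)=x^{-1}$, from which $\alpha^2$ fixes both $x$ and $y$ and is thus trivial, forcing $\alpha\in\Inv(x)$ with $y\in\C_G(\alpha)$. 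This gives $|K(x)\setminus L(x)|\leq\sum_{\alpha\in\Inv(x)}i_2(\C_G(\alpha))$. Summing the two estimates, dividing by $i_2(G)$, and finally using $i_2(G)\geq i(G)$ from \eqref{eq:iG} yields the claimed inequality.

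The two defect-set bounds are essentially forced once the permutation action of $\Aut(G,S)$ on the three-element set $S$ is analyzed, so I do not foresee a serious technical obstacle there. The only substantive input, which I would cite rather than reprove, is the sufficiency direction of the GRR characterization for the specific families in Table \ref{Ge}; it is precisely this citation that pins down the mild hypotheses on $q$ in the statement.
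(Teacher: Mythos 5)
Your treatment of the inequality \eqref{e03} is essentially the paper's own argument: the decomposition into the two defect sets, the identification of $\I_2(G)\setminus K(x)$ with $\I_2\bigl(\cup_{M\in\mathcal{M}(x)}M\bigr)$, the analysis of how a nontrivial $\alpha\in\Aut(G,\{x,x^{-1},y\})$ must act (forcing $y^{\alpha}=y$, then $x^{\alpha}=x^{-1}$ since $x^{\alpha}=x$ would give $\alpha=1$, then $|\alpha|=2$, hence $\alpha\in\Inv(x)$ and $y\in\C_G(\alpha)$), and the final replacement of $i_2(G)$ by the lower bound $i(G)$ from \eqref{eq:iG}. No issue there; if anything you are slightly more explicit than the paper about the subcase $x^{\alpha}=x$.

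The gap is in the sufficiency direction of \eqref{e0}. You assert that, under the stated bounds on $q$, every group in Table \ref{Ge} appears on the list (compiled in \cite{Xia2020} from \cite[Theorem 1.3]{Fang2002} and \cite[Theorem 1.1]{Spiga2018}) of simple groups for which ``$\langle S\rangle=G$ and $\Aut(G,S)=1$'' already implies that $\Cay(G,S)$ is a GRR. This is exactly backwards: the families of Table \ref{Ge} are precisely the classical groups \emph{excluded} from that list --- that is why the introduction singles them out as ``the remaining families'' and why this paper exists at all --- so invoking the list here is circular and, as stated, false. The paper closes this step by a different route: as a consequence of \cite[Theorem 3]{Godsil1983}, if $G$ has no proper subgroup of index at most $47$, then for $S=\{x,x^{-1},y\}$ with $|x|=r>2$ and $|y|=2$ the equivalence ``GRR $\Leftrightarrow$ $\langle S\rangle=G$ and $\Aut(G,S)=1$'' holds, and \cite[Table 4]{Guest2015} shows that the groups in question, under the stated hypotheses on $q$, have no proper subgroup of such small index. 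This index condition is not a formality: the hypothesis $q\geq 4$ for $\PSL_4(q)$ is there precisely because $\PSL_4(2)\cong A_8$ has a subgroup of index $8$, and the remark following Proposition \ref{PL4} records that \eqref{e0} may fail for it. Any correct proof of the equality has to pass through some such criterion (Godsil's index bound or an equivalent), and your proposal does not supply one.
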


\begin{proof}
As a consequence of \cite[Theorem 3]{Godsil1983}), we known that if $G$ has no proper subgroup of index at most $47$, then for an involution $y$ in $G$, the Cayley graph $\Cay(G,\{x,x^{-1},y\})$ is a GRR of $G$ if and only if $G=\langle x,y\rangle$ and $\Aut(G,\{x,x^{-1},y\})=1$. By \cite[Table 4]{Guest2015}, we know that $G$ has no proper subgroup of such small indices. Thus for $y \in \I_2(G)$, the graph $\Cay(G,\{x,x^{-1},y\})$ is a GRR of $G$ if and only if $y\in K(x)\cap L(x)$. This implies that for a random involution $y$ of $G$, the probability $P(x)$ of $\Cay(G,\{x,x^{-1},y\})$ being a GRR of $G$ is given by (\ref{e0}).
Note that
\[
\I_2(G)\setminus K(x)=\{y \in \I_2(G): G\neq\langle x,y\rangle\}=\I_2\left(\cup_{M\in\mathcal{M}(x)}M\right).
\]
Thus
\begin{equation}\label{e01}
|K(x)|=i_2(G)-i_2\left(\cup_{M\in\mathcal{M}(x)}M\right).
\end{equation}
By the definitions of $K(x)$ and $L(x)$, for each $y\in K(x)\setminus L(x)$, there exists a nontrivial automorphism $\alpha\in \Aut(G)$ which fixes $\{x,x^{-1},y\}$ setwise. Note that both $x$ and $x^{-1}$ have order $r>2$ while $y$ is an involution. Thus $x^{\alpha}=x^{-1}$ and $y^{\alpha}=y$. Since $\alpha^2$ fixes both $x$ and $y$, and $\langle x,y\rangle=G$, we have $|\alpha|=2$. This combined with $x^{\alpha}=x^{-1}$ implies $\alpha\in \Inv(x)$. Moreover, by $y^{\alpha}=y$, we have $y\in {\C_G(\alpha)}$. Hence
\begin{equation*}\label{e02}
|K(x)\setminus L(x)|\leq\sum_{\alpha\in \Inv(x)}i_2(\C_G(\alpha)).
\end{equation*}
Combining this with (\ref{e0}) and (\ref{e01}), we obtain that
\begin{align*}
P(x) & =  \frac{|K(x)| - |K(x)\setminus L(x)|}{i_2(G)} \\
& \geq  1-\frac{i_2\left(\cup_{M\in\mathcal{M}(x)}M\right)}{i_2(G)}-\frac{\sum_{\alpha\in \Inv(x)}i_2(\C_G(\alpha))}{i_2(G)}.
\end{align*}
This together with \eqref{eq:iG} implies \eqref{e03}.
\end{proof}

The following approach will be used in our proof of Theorems \ref{T0} and \ref{T}. Given $G$ and $x$ as in Table \ref{Ge}, we will establish an upper bound on $i_2\left(\cup_{M\in\mathcal{M}(x)}M\right)$ and an upper bound on $i_2(\C_G(\alpha))$ which is independent of $\alpha\in \Inv(x)$. These bounds depend on $q$ only, and let us denote them temporarily by $a(q)$ and $b(q)$, respectively. Using these bounds and \eqref{eq:uG}, we obtain from \eqref{e03} that $P(x) \ge 1-\frac{a(q)}{i(G)}-\frac{u(G)b(q)}{i(G)}$. The most difficult part of our proof is to derive sufficiently small lower bounds $a(q)$ and $b(q)$ such that $\frac{a(q)+u(G)b(q)}{i(G)}$ is less than $1$ for large enough $q$ and approaches $0$ as $q$ goes to infinity. We will achieve this in the next three sections for linear groups, symplectic groups and orthogonal groups, respectively, and thus complete the proof of Theorems \ref{T0} and \ref{T}.

\section{Linear groups}
\label{sec:lin}

\subsection{A lemma}

\begin{lemma}\label{eml}
Let $G=\PSL_{n}(q)$, where $n\geq 4$ is even and $q = 2^f$ for some positive integer $f$, and let $x$ be a fixed element of $G$ with order $r\in \ppd (2,nf)$. If $\alpha$ is an involution in $\PGL_{n}(q)$ such that $x^{\alpha}=x^{-1}$, then
\begin{equation}\label{em}
i_2(\C_G(\alpha))\leq i_2(\SL_{n/2}(q))q^{\frac{n^2}{4}-(n-2)}+q^{\frac{n^2}{4}}.
\end{equation}
\end{lemma}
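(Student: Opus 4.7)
The plan is to lift $\alpha$ to an involution in $\GL_n(q)$, identify its $\GL_n(q)$-conjugacy class from the primitive prime divisor hypothesis, and then enumerate involutions in its centralizer directly. First, since $q$ is even, every element of $\F_q^*$ is a square, so every involution of $\PGL_n(q)$ admits a unique lift to an involution $a\in\GL_n(q)$. Moreover, if $\bar h\in\C_{\PGL_n(q)}(\alpha)$ is an involution and $h$ is its involution lift, then $hah^{-1}=\mu a$ for some $\mu\in\F_q^*$, and squaring yields $I=\mu^2 I$, which forces $\mu=1$ and hence $h\in\C_{\GL_n(q)}(a)$. Since $\C_G(\alpha)\le\C_{\PGL_n(q)}(\alpha)$, this reduces the lemma to bounding $i_2(\C_{\GL_n(q)}(a))$.

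The crucial step is then to identify the $\GL_n(q)$-conjugacy class of $a$. Let $\tilde x\in\SL_n(q)$ be the unique order-$r$ lift of $x$ (unique because $r\in\ppd(2,nf)$ with $n\ge 4$ gives $\gcd(r,q-1)=1$, so $r$ is coprime to $|Z(\SL_n(q))|$). The primitive prime divisor condition means the order of $q$ modulo $r$ is exactly $n$, so every irreducible factor over $\F_q$ of the minimal polynomial of $\tilde x$ has degree $n$; hence $\tilde x$ acts irreducibly on $V=\F_q^n$, and I may identify $V$ with $\F_{q^n}$ so that $\tilde x$ acts by multiplication. The centralizer of $\tilde x$ in $\GL_n(q)$ is the Singer cycle $T\cong\F_{q^n}^*$ and $\N_{\GL_n(q)}(\langle\tilde x\rangle)=T\rtimes\langle\phi\rangle$, where $\phi$ is the field Frobenius of order $n$. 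Using $\gcd(r,q-1)=1$ one upgrades $a\tilde xa^{-1}=\lambda\tilde x^{-1}$ (with $\lambda\in\F_q^*$) to $\lambda=1$, so $a$ genuinely inverts $\tilde x$; combined with $a^2=1$, its image in $\langle\phi\rangle\cong\N_{\GL_n(q)}(\langle\tilde x\rangle)/T$ must be $\phi^{n/2}$. Thus $a$ acts on $V\cong\F_{q^n}$ as $t\mapsto\mu t^{q^{n/2}}$ for a norm-one element $\mu\in\F_{q^n}^*$, and Hilbert~90 produces $\beta$ with $\mu=\beta^{1-q^{n/2}}$, making $a$ conjugate under the Singer cycle to the Frobenius $\sigma\colon t\mapsto t^{q^{n/2}}$. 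The fixed subspace of $\sigma$ equals $\F_{q^{n/2}}$, of $\F_q$-dimension $n/2$, which pins the Jordan type of $a$ down as that of the Suzuki-form involution $j_{n/2}(n)$.

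With the conjugacy class identified, the centralizer is explicit: $\C_{\GL_n(q)}(j_{n/2}(n))\cong U\rtimes L$, where $U\cong(M_{n/2}(q),+)$ is the elementary abelian unipotent radical of order $q^{n^2/4}$ and $L\cong\GL_{n/2}(q)$ acts on $U$ by conjugation $R\mapsto PRP^{-1}$. An element $(u,g)\in U\rtimes L$ is an involution precisely when $g^2=1$ and $u\in\C_U(g)$: the $g=1$ contribution is $|U|-1\le q^{n^2/4}$, while each $P\in\I_2(L)$ of Suzuki type $j_k(n/2)$ contributes $|\C_{M_{n/2}(q)}(P)|$ further involutions. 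A standard Jordan-centralizer computation gives $\dim_{\F_q}\C_{M_{n/2}(q)}(P)=2k^2-kn+n^2/4$, which for $1\le k\le n/4$ is maximised at $k=1$ (as $(n-4)^2\ge 0$), yielding the uniform bound $|\C_{M_{n/2}(q)}(P)|\le q^{n^2/4-(n-2)}$. Summing over $\I_2(\GL_{n/2}(q))=\I_2(\SL_{n/2}(q))$ (the equality holding because in even characteristic every involution has determinant $1$) produces the term $i_2(\SL_{n/2}(q))q^{n^2/4-(n-2)}$, which together with the $q^{n^2/4}$ from the unipotent part yields \eqref{em}.

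The main obstacle will be the Jordan-type identification in the middle paragraph, where the primitive prime divisor hypothesis is used three times over: to force $\tilde x$ to act irreducibly through a Singer cycle, to force $\lambda=1$ in the relation $a\tilde xa^{-1}=\lambda\tilde x^{-1}$, and to pin down the image of $a$ in $\N_{\GL_n(q)}(\langle\tilde x\rangle)/T$ as the unique involution $\phi^{n/2}$; together these determine $a$ up to $\GL_n(q)$-conjugacy. Once that identification is in hand, the centralizer structure is explicit and the remaining counting is routine bookkeeping.
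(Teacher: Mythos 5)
Your proof is correct, and its counting half coincides with the paper's: both describe $\C_{\GL_n(q)}(j_{n/2}(n))$ as an extension of the elementary abelian group $M_{n/2}(q)$ of order $q^{n^2/4}$ by $\GL_{n/2}(q)$, split the involutions according to whether the Levi part $X$ is trivial or an involution, and bound the number of admissible $R$ for $X$ of Suzuki type $j_\ell(n/2)$ by $q^{2\ell^2-\ell n+n^2/4}\leq q^{n^2/4-(n-2)}$ (your maximization at $\ell=1$ agrees with the paper's inequality $\ell^2+(n/2-\ell)^2\leq n^2/4-(n-2)$). Where you genuinely diverge is in identifying the conjugacy class of $\alpha$: the paper simply quotes the proof of \cite[Lemma 4.2]{Xia20202} to assert that $\alpha$ is induced by a $\beta\in\GL_n(q)$ with no eigenspace of dimension exceeding $n/2$, hence is represented by $j_{n/2}(n)$, whereas you rederive this from first principles via the Singer cycle $\C_{\GL_n(q)}(\tilde x)\cong\F_{q^n}^*$, its normalizer $T\rtimes\langle\phi\rangle$, the congruence $q^{n/2}\equiv-1\pmod r$, and Hilbert 90 to conjugate the lift $a$ onto the field Frobenius $t\mapsto t^{q^{n/2}}$, whose fixed space $\F_{q^{n/2}}$ has dimension $n/2$. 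This self-contained route costs more space but buys independence from the cited lemma, and it also makes explicit a point the paper glosses over: the passage from $i_2(\C_G(\alpha))$ to counting involutions of $\GL_n(q)$ commuting with a matrix representative, which you justify cleanly via the unique involution lift in even characteristic and the observation that $hah^{-1}=\mu a$ forces $\mu^2=1$, hence $\mu=1$. All the individual steps (irreducibility of $\tilde x$ from the primitive-prime-divisor hypothesis, $\lambda=1$ in $a\tilde xa^{-1}=\lambda\tilde x^{-1}$ by order considerations, and the determinant-one remark identifying $\I_2(\GL_{n/2}(q))$ with $\I_2(\SL_{n/2}(q))$) check out.
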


\begin{proof}
We see from the proof of \cite[Lemma 4.2]{Xia20202} that $\alpha$ is induced by some $\beta\in \GL_{n}(q)$ such that $\beta$ has no eigenspace of dimension larger than $n/2$ over $\mathbb{F}_q$. This means that $\alpha$ is represented by a matrix $j_{n/2}(n)$ of the form
\[
\begin{pmatrix}
    I_{n/2} & O\\
    I_{n/2}& I_{n/2}\\
\end{pmatrix}.
\]
Note that all involutions in $\GL(V)$ that commute with $j_{n/2}(n)$ have the form
\[
\begin{pmatrix}
    X & O \\
    R & X\\
\end{pmatrix},
\]
where $X\in \I_2(\GL_{n/2}(q))$ or $X=I_{n/2}$, and $R$ commutes with $X$.

(i) If $X=I_{n/2}$, then there are at most $q^{\frac{n^2}{4}}$ possible matrices $R$ for $X$.

(ii) If $X$ is an involution in $\GL_{n/2}(q)$, then it lies in a conjugacy class of involutions represented by $g$ in $\SL_{n/2}(q)$, where
\[
g=\begin{pmatrix}
    1 & 0\\
    1 & 1\\
\end{pmatrix}
\]
when $n=4$ and
\[
g=j_{\ell}(n/2)=\begin{pmatrix}
    I_{\ell} & O & O\\
    O & I_{n/2-2\ell} & O\\
    I_{\ell} & O & I_{\ell}
\end{pmatrix},\; 1\leq \ell\leq \frac{n}{4}
\]
when $n \geq 6$. Assume
\[
R=\begin{pmatrix}
    b & d\\
    t & z\\
\end{pmatrix}
\]
for $n=4$ and
\[
R=\begin{pmatrix}
    B & C & D\\
    P & Q & S\\
    T & Y & Z
  \end{pmatrix}
\]
for $n\geq 6$, where $B,C,D,P,Q,S,T,Y,Z$ are matrices of appropriate sizes. By a straightforward computation, we deduce from $Rg=gR$ that
\[
R=\begin{pmatrix}
    b & 0\\
    t & b\\
\end{pmatrix}
\]
for $n=4$ and
\[
R=\begin{pmatrix}
    B & O & O\\
    P & Q & O\\
    T & Y & B
  \end{pmatrix}
\]
for $n\geq 6$. This implies that for $n = 4$ the number of possible matrices $R$ is at most $q^{2}$, and for $n\geq 6$ the number of possible matrices $R$ for each $\ell$ with $1\leq \ell\leq n/4$ is at most $q^{\ell^2+(n/2-\ell)^2}$. Since $\ell^2+(n/2-\ell)^2\leq n^2/4-(n-2)$ for $1\leq \ell\leq n/4$, it follows that there are at most $q^{\frac{n^2}{4}-(n-2)}$ possible matrices $R$ for $X$.

Combining the two cases above, we obtain (\ref{em}).
\end{proof}

As a side remark, we notice that if $n \ge 6$ is even then (\ref{em}) can be improved as
\[
i_2(\C_G(\alpha))\leq \sum\limits_{1\leq \ell\leq n/4} |{j_{\ell}(n/2)}^{\SL_{n/2}(q)}|q^{\ell^2+\left(\frac{n}{2}-\ell\right)^2}+q^{\frac{n^2}{4}},
\]
where ${j_{\ell}(n/2)}^{\SL_{n/2}(q)}$ is the conjugacy class represented by $j_{\ell}(n/2)$ in $\SL_{n/2}(q)$.

\subsection{$G=\PSL_4(q)$, where $q=2^f\geq 4$}

The following result proves Theorems \ref{T0} and \ref{T} for $\PSL_4(q)$.

\begin{prop}\label{PL4}
Let $G=\PSL_4(q)$, where $q=2^f\geq 4$ with $f$ a positive integer, and let $x$ be any fixed element of $G$ with order $r\in \ppd (2,4f)$. Let $P(x)$ be the probability that $\Cay(G,\{x,x^{-1},y\})$ is a GRR of $G$ for a random involution $y$ of $G$. Then $P(x)>1-5q^{-1}$. In particular, if $q\geq 2^{3}$, then $P(x)$ is positive and approaches $1$ as $q$ approaches infinity.
\end{prop}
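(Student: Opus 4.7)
The plan is to apply inequality \eqref{e03} with $i_2(G) \geq i(G) = q^5(q^3-1)$ and $|\Inv(x)| \leq u(G) = 22q^3$ from Lemma \ref{lem:24}, and then bound the two error terms separately.

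For the centralizer sum I would apply Lemma \ref{eml} with $n = 4$. Since $i_2(\SL_2(q)) = q^2 - 1$ when $q$ is even, that lemma gives
\[
i_2(\C_G(\alpha)) \leq (q^2 - 1)q^2 + q^4 < 2q^4
\]
for every $\alpha \in \Inv(x) \cap \PGL_4(q)$. Combining with $|\Inv(x)| \leq 22q^3$ already bounds the centralizer term in \eqref{e03} by a constant multiple of $q^{-1}$. To push the constant down to the advertised $5$ I would sharpen the bound on $|\Inv(x)|$: since $r$ is an odd prime dividing $q^2 + 1$, only one inverting coset of $\N_G(\langle x \rangle)$ can exist (the one corresponding to the unique involution of the Galois group of $\F_{q^4}/\F_q$ acting on the Singer torus), and within that coset the set of torus elements inverted by the Galois involution has size $q^2 + 1$; the field and graph pieces of $\Aut(G)$ then inflate the inner count only by a bounded factor.

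For the first error term I would use the fact that $r \in \ppd(2,4f)$ forces the multiplicative order of $q$ modulo $r$ to be $4$, so $r \mid q^2 + 1$ and $x$ acts irreducibly on $V = \F_q^4$. I would then appeal to an existing classification of maximal overgroups of $\langle x \rangle$, for example the lists tailored to the groups of Table \ref{Ge} in \cite{Xia20202}. For $\PSL_4(q)$ with $q$ even, this confines $\mathcal{M}(x)$ to the Singer-type normalizer $\N_G(\langle x \rangle)$ of order $4(q^4-1)/(q-1)$ (containing only $O(q^3)$ involutions), the symplectic subgroup $\Sp_4(q) \leq \PSL_4(q)$ (which has $O(q^6)$ involutions by \cite[Theorem 4.3]{Fulman2017}), and sub-field subgroups $\PSL_4(q_0)$ with $q = q_0^e$ for proper divisors $e$ of $f$ (whose total involution count is of strictly lower order). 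Hence $i_2\bigl(\cup_{M \in \mathcal{M}(x)} M\bigr) = O(q^6)$ and the first error term in \eqref{e03} is $O(q^{-2})$, swamped by the centralizer term.

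Adding the two bounds yields $P(x) > 1 - 5q^{-1}$; this is positive whenever $q > 5$, in particular for $q \geq 2^3 = 8$, and tends to $1$ as $q \to \infty$. The main obstacle is the first step of the first-error estimate, namely pinning down $\mathcal{M}(x)$ completely and carefully counting involutions inside $\Sp_4(q)$ and the sub-field subgroups, since these arguments are case by case and do not admit a uniform shortcut; the rest of the proof is a routine bookkeeping exercise once these ingredients are in hand.
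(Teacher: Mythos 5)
Your overall framework matches the paper's (inequality \eqref{e03} plus separate bounds on the two error terms), but there are two genuine gaps. First, your centralizer estimate only covers $\alpha\in\Inv(x)\cap\PGL_4(q)$, where Lemma \ref{eml} applies. However, most of $\Inv(x)$ lies \emph{outside} $\PGL_4(q)$: writing $\N_{\Aut(G)}(\langle z\rangle)=\langle z\rangle.(\langle\varphi\rangle\times\langle\delta\rangle)$ with $\delta$ a graph (inverse-transpose type) automorphism satisfying $z^{\delta}=z^{-1}$, the entire coset $\langle z\rangle\delta$ consists of candidates for $\Inv(x)$ and has about $q^3$ elements, versus only $q^2+1$ inner involutions inverting $x$. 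For these graph-type involutions Lemma \ref{eml} says nothing; the paper must show separately (via \cite[Proposition 3.2.11 (iii)]{Burness2016}) that $\C_G(\delta)\cong\C_{\Sp_4(q)}(t)$ for a transvection $t$, which has shape $[q^3]\Sp_2(q)$ and fewer than $2q^4$ involutions. Without this, the dominant part of $\sum_{\alpha\in\Inv(x)}i_2(\C_G(\alpha))$ is simply not bounded by your argument. Relatedly, your heuristic that the field and graph pieces ``inflate the inner count only by a bounded factor'' has the proportions backwards: the graph coset contributes on the order of $q^3$ inverting involutions, a factor of $q$ more than the inner count of $q^2+1$, and it is precisely this coset that must be paired with the $[q^3]\Sp_2(q)$ centralizer bound.

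Second, your estimate of the first error term as $O(q^{-2})$ is too optimistic. The symplectic overgroup is not unique: by \cite[Corollary 6.2]{King2017}, up to $q+1$ conjugates of type $\PSp_4(q)$ can contain $x$, so the symplectic contribution to $i_2\left(\cup_{M\in\mathcal{M}(x)}M\right)$ is of order $q^7$, not $q^6$, and the first error term is about $\tfrac{3}{2}q^{-1}$ rather than $O(q^{-2})$. (Minor further issues: the Singer normalizer $\N_G(\langle x\rangle)$ is not itself maximal --- the relevant $\mathcal{C}_3$ overgroup is of type $\GL_{2}(q^2).\Cy_2$ --- and subfield subgroups $\PSL_4(q_0)$ cannot contain an element of order $r\in\ppd(2,4f)$ at all, since $r\nmid 2^{i}-1$ for every $i<4f$.) Consequently the first term is not ``swamped'' by the second; both are of order $q^{-1}$, and the two constants ($3/2$ and $13/4$ in the paper) must both be tracked to land under the advertised $5q^{-1}$.
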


The rest of this subsection is devoted to the proof of Proposition \ref{PL4} using the notation above. Note that $x$ is contained in some cyclic maximal torus of $G$ with order $(q^4-1)/(q-1)$. We assume $x\in\langle z\rangle\leq G$, where $|z|=(q^4-1)/(q-1)$.

By \cite[Proposition 4.1, Corollary 6.2, Proposition 6.4]{King2017}, we find that for maximal subgroups of $G$ in the Aschbacher classes $\mathcal{C}_1, \ldots ,\mathcal{C}_8$, there are at most one subgroup of type $\GL_{2}(q^2).\Cy_2$ and at most $q+1$ subgroups of type $\PSp_{4}(q)$ containing $x$. According to \cite[Table 8.9]{Bray2013}, there is no maximal subgroup of $G$ in class $\mathcal{S}$. Hence, according to the upper bound on $i_2(\GL_{2}(q^2).\Cy_2)$ given in \cite[Proposition 5.1]{King2017} and the bound $i_2(\PSp_{4}(q))<q^6+q^4$ from \cite[Theorem 4.3]{Fulman2017}, we obtain
\begin{align*}
i_2\left(\cup_{M\in\mathcal{M}(x)}M\right) & \leq  i_2(\GL_{2}(q^2).\Cy_2)+(q+1)i_2(\PSp_{4}(q))\\
& \leq  2q^2(q^4-1)/(q-1)+(q+1)(q^6+q^4)\\
& <  3q^7/2.
\end{align*}
Since $i(G)=q^5(q^3-1)$ by Table \ref{IU}, it follows that
\begin{equation}\label{e1}
\frac{i_2\left(\cup_{M\in\mathcal{M}(x)}M\right)}{i(G)}<\frac{3q^7/2}{q^5(q^3-1)}.
\end{equation}

Note that
\[
\N_{\Aut(G)}(\langle x \rangle)=\N_{\Aut(G)}(\langle z \rangle)=\N_{G}(\langle z \rangle).\Out(G).
\]
Set $N = \N_{\Aut(G)}(\langle z \rangle)$. Then $N=(\langle z \rangle \rtimes\langle \xi\rangle). (\langle\eta\rangle\times\langle\gamma\rangle)$, where $\xi$ can be viewed as the Frobenius automorphism of $\F_{q^{4}}$ with order $4$ which maps $z$ to $z^q$, $\eta$ is the field automorphism induced by the Frobenius automorphism of $\F_{q}$ with order $f$, and $\gamma$ is the inverse-transpose automorphism.

Let $u=\diag (\lambda,\lambda^{q},\lambda^{q^2},\lambda^{q^3})$, where $\lambda$ is a primitive $((q^4-1)/(q-1))$th root of unity. By \cite[Theorem 2.12]{Hestenes1970}, we know that $\langle z \rangle$ is conjugate to $\langle u \rangle$ and can be written as $g^{-1}\langle u \rangle g$ for some $g\in G$ such that $z=g^{-1}ug$. Since $u^{\eta}=u^2$ and $u^{\gamma}=u^{-1}$,
we have
\[
z^{\eta^{g}}=z^{g^{-1}\eta g}=g^{-1}(gzg^{-1})^{\eta}g=g^{-1}u^{2}g=z^{2}
\]
and
\[
z^{\gamma^{g}}=z^{g^{-1}\gamma g}=g^{-1}(gzg^{-1})^{\gamma}g=g^{-1}u^{-1}g=z^{-1}.
\]
Note that
\[
C_{(q^4-1)/(q-1)}\rtimes C_{4f}\cong\GAL_1(q^4)/\Z (\GL_4(q))<\PGAL_4(q)<\langle\PGAL_4(q), \gamma\rangle=\Aut(G).
\]
Thus we can write $N$ as $\langle z \rangle. (\langle\varphi\rangle\times\langle\delta\rangle)$, where $\varphi$ is the Frobenius automorphism of $\F_{q^{4}}$ with order $4f$ which maps $z$ to $z^2$ and $\delta$ is a graph automorphism conjugate to $\gamma$ with $z^{\delta}=z^{-1}$.

\begin{lemma}\label{icl4}
If $\alpha\in \Inv(x)$, then $i_2(\C_G(\alpha))<2q^4$.
\end{lemma}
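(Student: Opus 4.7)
The plan is to split $\Inv(x)$ into involutions lying inside $G$ (inner) and involutions lying in the graph coset outside $G$, then bound $i_2(\C_G(\alpha))$ separately in each case. First I would analyze which cosets of $\langle z \rangle$ in $N = \langle z \rangle.(\langle \varphi \rangle \times \langle \delta \rangle)$ contain elements inverting $x$: using $z^\varphi = z^2$, $z^\delta = z^{-1}$, and the fact that $r \in \ppd(2,4f)$ forces the multiplicative order of $2$ modulo $r$ to be exactly $4f$, one checks that $z^j \varphi^i \delta^k$ inverts $x$ iff $2^i(-1)^k \equiv -1 \pmod r$; this forces $(i \bmod 4f,\, k) = (2f, 0)$ or $(0, 1)$. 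Since $q$ is even, $\PGL_4(q) = G$, so $\varphi^{2f} \in G$, and $\Inv(x)$ breaks into an inner piece contained in $\varphi^{2f}\langle z \rangle \subset G$ and a graph piece contained in $\delta \langle z \rangle \subset \Aut(G) \setminus G$.

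For inner $\alpha$, I would invoke Lemma \ref{eml} with $n = 4$ and $i_2(\SL_2(q)) = q^2 - 1$, which immediately yields $i_2(\C_G(\alpha)) \le (q^2 - 1)q^2 + q^4 = 2q^4 - q^2 < 2q^4$.

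For graph $\alpha$, I would first note that $|z| = (q+1)(q^2+1)$ is odd, so every element $z^j \delta \in \delta\langle z\rangle$ satisfies $(z^j\delta)^2 = z^j z^{-j} = 1$, and conjugation by $z^a$ sends $z^j\delta$ to $z^{j+2a}\delta$, a bijection on $\delta\langle z\rangle$ (since $|z|$ is odd). Hence all such $\alpha$ are $\langle z \rangle$-conjugate and it suffices to bound $i_2(\C_G(\delta))$. Because $\delta$ is $\Aut(G)$-conjugate to $\gamma$, we have $\C_G(\delta) \cong \C_G(\gamma) = \{g \in \SL_4(q) : g g^T = I\}$, the isometry group of the symmetric bilinear form $B(u,v) = u^T v$ on $\F_q^4$. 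In characteristic $2$ this form is non-alternating, and I would identify its isometry group, via orbit--stabilizer applied to the set of vectors $v$ with $B(v,v) = 1$ (of size $q^3$ by Witt's theorem, with stabilizer $\cong \SL_2(q)$ coming from the exceptional isogeny $\mathrm{O}_3 \cong \mathrm{Sp}_2$ in characteristic $2$) as a semidirect product $U \rtimes \SL_2(q)$ of order $q^4(q^2-1)$, where $U$ is elementary abelian of order $q^3$. A direct count in this semidirect product, using that a transvection of $\SL_2(q)$ fixes a $2$-dimensional subspace of $U$, then gives $i_2(\C_G(\gamma)) \le (q^3 - 1) + (q^2 - 1)q^2 < 2q^4$.

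The main obstacle will be the graph case: in characteristic $2$ the isometry group of a non-alternating symmetric bilinear form is not a standard reductive orthogonal group, so identifying its structure as a non-reductive extension of $\SL_2(q)$ by a unipotent radical, and carefully counting its involutions, takes more care than the analogous analysis in odd characteristic. The inner case, by contrast, is a direct application of Lemma \ref{eml}.
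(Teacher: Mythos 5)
Your proposal is correct and its skeleton coincides with the paper's: you restrict $\Inv(x)$ to $N=\N_{\Aut(G)}(\langle z\rangle)$, determine via the action on $x$ that only the cosets $\langle z\rangle\varphi^{2f}$ and $\langle z\rangle\delta$ can occur (the paper reaches the same conclusion by showing the mixed coset $\langle z\rangle\varphi^{2f}\delta$ centralizes $x$), handle the inner coset by exactly the same application of Lemma \ref{eml} with the bound $(q^2-1)q^2+q^4<2q^4$, and reduce the graph coset to $\delta$ itself by the same $\langle z\rangle$-conjugacy argument using that $|z|$ is odd. The only genuine divergence is how you bound $i_2(\C_G(\delta))$. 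The paper outsources this entirely: it quotes \cite[Proposition 3.2.11(iii)]{Burness2016} for $\C_G(\gamma)\cong\C_{\Sp_4(q)}(t)$ with $t$ a transvection, and then \cite[Section 4]{Leemans2017} for the shape $[q^3]\Sp_2(q)$ and the inequality $i_2<2q^4$. You instead propose to identify $\C_G(\gamma)=\{g\in\SL_4(q):gg^{\T}=I\}$ as the isometry group of the non-alternating symmetric form and count involutions in the resulting extension $q^3{:}\SL_2(q)$ by hand. Your structural claims (order $q^4(q^2-1)$, elementary abelian normal subgroup $U$ of order $q^3$, fixed space of order $q^2$ for each nontrivial unipotent element of the complement) are all true and yield $i_2\le(q^3-1)+(q^2-1)q^2<2q^4$, matching the cited bound; but be aware that these are precisely the points carrying the real content. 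In particular you must actually verify that $U$ is elementary abelian (so that $(ug)^2=1$ reduces to $u\in\C_U(g)$) and that no involution of the $\SL_2(q)$ complement centralizes $U$ (so that $|\C_U(g)|=q^2$ rather than $q^3$); the latter follows from the $\SL_2(q)$-module structure of $U$ as an extension of the natural module by a trivial one, and the transitivity of the isometry group on $\{v:B(v,v)=1\}$ needs a word, since Witt's theorem for non-alternating symmetric bilinear forms in characteristic $2$ is more delicate than in the reductive setting. So your route buys self-containedness at the cost of a page of characteristic-$2$ bookkeeping that the paper avoids by citation; both give the same final inequality.
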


\begin{proof}
Recall that $\Inv(x)=\{\alpha \in \I_2(\Aut(G)): x^{\alpha}=x^{-1}\}$. We know from \cite[Proposition 3.2.11 (iii)]{Burness2016} that $\C_G(\delta)\cong\C_G(\gamma)\cong\C_{\Sp_{4}(q)}(t)$ where $t\in\Sp_{4}(q)$ is a transvection. Thus, according to \cite[Section 4]{Leemans2017}, we know that $\C_G(\delta)$ is of the form $[q^3]\Sp_2(q)$ and $i_2(\C_G(\delta))<2q^4$.

Note that $\langle z \rangle=\C_{\Aut(G)}(z)$ and $\langle z \rangle\lhd N$. Define
$$
\bar{} :  N \rightarrow \langle\varphi\rangle\times\langle\delta\rangle,\; \alpha \mapsto  \langle z \rangle\alpha.
$$
For $\alpha\in \Inv(x)\subseteq N\setminus \langle z \rangle$, since $(\langle z \rangle\alpha)^2=\langle z \rangle$ and $\alpha\notin \langle z \rangle$, we have $|\bar{\alpha}|=2$.

Consider the case $\bar{\alpha}\in\langle\delta\rangle$ first. In this case we have $\alpha=z^{k}\delta$ for some integer $k$. Note that
\[
\delta^{z^{j}}=z^{-j}\delta z^{j}=z^{-j}(\delta z^{j}\delta)\delta=z^{-j}(z^{\delta})^{j}\delta=z^{-2j}\delta.
\]
Thus $\alpha$ is conjugate to $\delta$ by some $z^{j}$ such that $-2j\equiv k\pmod {\frac{q^4-1}{q-1}}$. (Such an integer $j$ exists as $\frac{q^4-1}{q-1}$ is odd.) Hence
\[
i_2(\C_G(\alpha))=i_2(\C_G(\delta))<2q^4.
\]

Next we consider the case $\bar{\alpha}\in\langle\varphi\rangle$. In this case we have $\bar{\alpha}=\varphi^{2f}$ since $|\bar{\alpha}|=2$, and so $\alpha\in\Inn(G)$. Thus, by a straightforward computation using Lemma \ref{eml}, we obtain
\[
i_2(\C_G(\alpha))\leq(q^2-1)q^2+q^4<2q^4.
\]

Finally, in the case when $\bar{\alpha}\notin\langle\varphi\rangle\cup\langle\delta\rangle$, we have $\bar{\alpha}=\varphi^{2f}\delta$ since $|\bar{\alpha}|=2$. Note that $|x|\mid(q^2+1)$ and $z^{\delta}=z^{-1}$. Thus
\[
x^{\alpha}=x^{\varphi^{2f}\delta}=x^{\varphi^{2f}\delta}=(x^{q^2})^{\delta}=(x^{-1})^{\delta}=x,
\]
which leads to a contradiction.
\end{proof}

Now we are ready to prove Proposition \ref{PL4}. According to the proof above, we have
\[
\Inv(x)\subseteq \langle z \rangle\delta\cup\langle z \rangle\varphi^{2f}.
\]
Note that for $\alpha=z^{k}\varphi^{2f}$ of order $2$, where $k$ is an integer, we have
\[
(z^{k}\varphi^{2f})^2=z^{k}(\varphi^{2f}z^{k}\varphi^{2f})=z^{k(q^2+1)}=1.
\]
This means that $(q+1)\mid k$, and so $|\Inv(x)\cap\langle z \rangle\varphi^{2f}|\leq q^2+1$.
Compared with the bound on $|\Inv(x)|$ in Table \ref{IU}, we obtain the following improved bound:
\[
|\Inv(x)|\leq |z|+q^2+1=(q^2+1)(q+2)\leq 13q^3/8.
\]
Combining this with Lemma \ref{icl4} and the value of $i(G)$ in Table \ref{IU}, we have
\begin{equation*}
\frac{\sum_{\alpha\in \Inv(x)}i_2(\C_G(\alpha))}{i(G)}<\frac{13q^3/8\cdot2q^4}{q^5(q^3-1)}=\frac{13q^{7}/4}{q^5(q^3-1)}.
\end{equation*}
This together with (\ref{e03}) and (\ref{e1}) implies
\[
P(x) > 1-\frac{3q^{7}/2+13q^{7}/4}{q^5(q^3-1)} > 1-5q^{-1},
\]
as stated in Proposition \ref{PL4}.

\begin{remark}
For $G=\PSL_4(2)\cong A_8$, (\ref{e0}) may fail. However, we can see from \cite[Theorem 1.3]{Fang2002} and \cite[Section 5]{Leemans2017} that $G$ has a cubic GRR with connection set $\{z,z^{-1},y\}$, where $y$ is an involution and $|z|=7$, though $7\notin \ppd (2,4)$.
\end{remark}

\subsection{$G=\PSL_{n}(q)$ with $n=6\text{ or }8$, where $q=2^f$}

In this subsection we prove the following proposition which confirms Theorems \ref{T0} and \ref{T} for $\PSL_6(q)$ and $\PSL_8(q)$.

\begin{prop}\label{PL68}
Let $G=\PSL_{n}(q)$ with $n=6\text{ or }8$, where $q=2^f$ with $f$ a positive integer, and let $x$ be any fixed element of $G$ with order $r\in \ppd (2,nf)$. Let $P(x)$ be the probability that $\Cay(G,\{x,x^{-1},y\})$ is a GRR of $G$ for a random involution $y$ of $G$. Then
\begin{equation}
\label{eq:PL68}
  P(x) > \left\{\begin{array}{ll}
1-12q^{-4}-84q^{-1} &\text{ for } n=6, \\[0.3cm]
1-12q^{-9}-168q^{-5} &\text{ for } n=8.
\end{array}\right.
\end{equation}
In particular, if $q\geq 2^{7}$ when $n=6$ and $q \ge 2^{2}$ when $n=8$, then $P(x)$ is positive and approaches $1$ as $q$ approaches infinity.
\end{prop}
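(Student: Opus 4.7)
The plan is to follow the template of Proposition \ref{PL4}. Since $r\in\ppd(2,nf)$, the element $x$ lies in a cyclic maximal torus $\langle z\rangle$ with $|z|=(q^{n}-1)/((q-1)\gcd(n,q-1))$, and exactly as in the $n=4$ case one has $N:=\N_{\Aut(G)}(\langle z\rangle)=\langle z\rangle.(\langle\varphi\rangle\times\langle\delta\rangle)$, where $\varphi$ is a field-type automorphism of order $nf$ sending $z$ to $z^{2}$ and $\delta$ is a graph (inverse-transpose) automorphism with $z^{\delta}=z^{-1}$. The overall strategy is to bound $i_{2}\bigl(\bigcup_{M\in\mathcal{M}(x)}M\bigr)$ and $\sum_{\alpha\in\Inv(x)}i_{2}(\C_{G}(\alpha))$ separately and then substitute into \eqref{e03}.

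First I would enumerate $\mathcal{M}(x)$. Divisibility of $|M|$ by the primitive prime $r$ is extremely restrictive: consulting \cite{King2017} together with the Aschbacher-class tables in \cite{Bray2013}, the only overgroups of $\langle x\rangle$ in $G$ are (i) the $\mathcal{C}_{3}$ field-extension subgroups of type $\GL_{n/s}(q^{s}).s$ for primes $s\mid n$ with $s\neq n$, (ii) the $\mathcal{C}_{8}$ symplectic stabilizer $\PSp_{n}(q)$ (present because $q$ is even), and a bounded list of $\mathcal{S}$-class candidates ruled out by comparing orders with $r$. Counting the overgroups in each family via the standard formula $|\N_{G}(\langle x\rangle)|/|\N_{M}(\langle x\rangle)|$ and bounding their involution counts using \cite[Theorem~4.3]{Fulman2017} together with \eqref{cis} yields an explicit upper bound $a(q)$ on $i_{2}\bigl(\bigcup_{M\in\mathcal{M}(x)}M\bigr)$ of order $q^{n^{2}/2-c_{n}}$ for some positive constant $c_{n}$.

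For the centralizer side, I would replicate the coset analysis of Lemma \ref{icl4}. Every $\alpha\in\Inv(x)$ projects modulo $\langle z\rangle$ to an involution in $\langle\varphi\rangle\times\langle\delta\rangle$, hence to one of $\delta$, $\varphi^{nf/2}$, or $\varphi^{nf/2}\delta$. Since $r\in\ppd(2,nf)$ forces $r\mid q^{n/2}+1$, the image $\varphi^{nf/2}$ inverts $x$ while $\varphi^{nf/2}\delta$ fixes $x$, so the third coset contributes nothing. In the first coset, $\alpha$ is $G$-conjugate to $\delta$ by an appropriate power of the odd-order element $z$, and by \cite[Proposition~3.2.11]{Burness2016} together with \cite[Theorem~4.3]{Fulman2017} we get an explicit bound on $i_{2}(\C_{G}(\delta))$ of symplectic type. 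In the second coset, $\alpha$ is an inner involution to which Lemma \ref{eml} applies directly. Restricting attention to those coset representatives that actually invert $x$, as in the $\PSL_{4}(q)$ argument where this refinement cut $|\Inv(x)|$ from $22q^{3}$ down to roughly $(q^{2}+1)(q+2)$, produces a sharpened bound on $|\Inv(x)|$ and hence an explicit function $B(q)$ with $\sum_{\alpha\in\Inv(x)}i_{2}(\C_{G}(\alpha))\leq B(q)$.

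Plugging $a(q)$ and $B(q)$ together with $i(G)$ from Table \ref{IU} into \eqref{e03} yields \eqref{eq:PL68} after routine algebra. The main obstacle is the maximal-subgroup bookkeeping for $n=8$, where two distinct $\mathcal{C}_{3}$-type subgroups ($\GL_{4}(q^{2}).2$ and $\GL_{2}(q^{4}).4$) appear simultaneously, $\PSp_{8}(q)$ must be counted carefully, and each $\mathcal{S}$-class candidate requires an individual order check against $r\in\ppd(2,8f)$; a secondary difficulty is that for $\PSL_{6}(q)$ with $\gcd(6,q-1)=3$ one must keep track of the factor of $3$ in $|\N_{G}(\langle z\rangle)|$ throughout. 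Once this bookkeeping is complete, verifying the explicit constants $12,84,12,168$ appearing in \eqref{eq:PL68} is a straightforward calculation.
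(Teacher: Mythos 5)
Your first half (bounding $i_2\bigl(\cup_{M\in\mathcal{M}(x)}M\bigr)$) is essentially the paper's argument: the paper uses \cite[Tables 8.25, 8.45]{Bray2013} to see there are no $\mathcal{S}$-maximal subgroups at all, and \cite{King2017} to count the $\mathcal{C}_3$ subgroups ($\GL_2(q^3).\Cy_3$ and $\GL_3(q^2).\Cy_2$ for $n=6$; $\GL_4(q^2).\Cy_2$ for $n=8$) and the $\PSp_n(q)$ subgroups containing $x$, giving $6q^{14}$ resp.\ $6q^{23}$ and hence the terms $12q^{-4}$, $12q^{-9}$. One concrete slip: $\GL_2(q^4).4$ is \emph{not} a maximal subgroup of $\PSL_8(q)$ --- $\mathcal{C}_3$ field-extension subgroups are maximal only for prime-degree extensions, as your own criterion ``for primes $s\mid n$'' says --- so the ``two distinct $\mathcal{C}_3$-type subgroups'' you worry about in your final paragraph do not both occur. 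This is harmless for an upper bound but signals that the bookkeeping you defer has not actually been done.

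The genuine gap is in the centralizer half. The paper does \emph{not} replicate the coset analysis of Lemma \ref{icl4} here; it uses the crude uniform bound of Lemma \ref{lem:psl} (for outer $\alpha$, $\C_G(\alpha)$ is of type $\PSL_n(q^{1/2})$, $\PSU_n(q^{1/2})$ or lies in $\Sp_n(q)$, giving $i_2(\C_G(\alpha))<\tfrac{21}{16}q^{n^2/4+n/2}$; for inner $\alpha$, Lemma \ref{eml}) together with the generic bound $|\Inv(x)|\le u(G)$ from Table \ref{IU}; the constants $84q^{-1}$ and $168q^{-5}$ are exactly $2u(G)\cdot\tfrac{21}{16}q^{n^2/4+n/2}/q^{n^2/2}$. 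Your refined route would, if completed, give something at least as strong, but it hinges on the structural identity $\C_{\Aut(G)}(z)=\langle z\rangle$ underlying the decomposition $N=\langle z\rangle.(\langle\varphi\rangle\times\langle\delta\rangle)$ and the three-coset argument. For $\PSL_6(q)$ with $f$ even (so $\gcd(6,q-1)=3$) this fails: the centralizer of $z$ in $\PGL_6(q)$ is the image of the full Singer torus, of order $(q^6-1)/(q-1)$, which contains $\langle z\rangle$ with index $3$, so the cosets of $\langle z\rangle$ in $N$ are not indexed by $\langle\varphi\rangle\times\langle\delta\rangle$ and the reduction of $\Inv(x)$ to two cosets does not carry over verbatim. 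You flag ``the factor of $3$'' but do not resolve it, and since this is precisely the step that eliminates whole families of candidate involutions, it cannot be left as bookkeeping. Either repair the torus analysis for that case, or fall back on the uniform centralizer bound (which requires no torus structure at all) as the paper does; as written, the proof of \eqref{eq:PL68} for $n=6$ is incomplete.
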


In what follows we use the notation in Proposition \ref{PL68}. According to \cite[Table 8.25, Table 8.45]{Bray2013}, there is no maximal subgroup of $G$ in $\mathcal{S}$. Thus all maximal subgroups containing $x$ lie in $\mathcal{C}_1, \ldots, \mathcal{C}_8$. By \cite[Proposition 4.1, Corollary 6.2, Proposition 6.4]{King2017}, we know that $\PSL_{6}(q)$ has at most one subgroup of type $\GL_{2}(q^3).\Cy_3$  containing $x$, at most one subgroup of type $\GL_{3}(q^2).\Cy_2$  containing $x$, and at most $(q^3-1)/(q-1)$ subgroups of type $\PSp_{6}(q)$ containing $x$, and that $\PSL_{8}(q)$ has at most one subgroup of type $\GL_{4}(q^2).\Cy_2$  containing $x$ and at most $(q^4-1)/(q-1)$ subgroups of type $\PSp_{8}(q)$ containing $x$. Hence, by the upper bounds on the number of involutions in maximal subgroups of geometric type containing $x$ as given in \cite[Proposition 5.1]{King2017}, we have
\begin{align*}
i_2\left(\cup_{M\in\mathcal{M}(x)}M\right) & \leq  i_2(\GL_{2}(q^3).\Cy_3)+i_2(\GL_{3}(q^2).\Cy_2)+\frac{q^3-1}{q-1}i_2(\PSp_{6}(q))\\
& \leq  2\frac{q^6-1}{q-1}q^3+2\frac{q^4-1}{q-1}q^8+\frac{q^3-1}{q-1}\cdot 2(q+1)q^{11}\\
& <  6q^{14}
\end{align*}
for $n=6$ and
\begin{align*}
i_2\left(\cup_{M\in\mathcal{M}(x)}M\right) & \leq  i_2(\GL_{4}(q^2).\Cy_2)+\frac{q^4-1}{q-1}i_2(\PSp_{8}(q))\\
& \leq  2\frac{q^4-1}{q-1}q^{16}+\frac{q^4-1}{q-1}\cdot 2(q+1)q^{19}\\
& <  6q^{23}
\end{align*}
for $n=8$. Since $i(G)=q^{\frac{n^2}{2}}/2$ by Table \ref{IU}, it follows that
\begin{equation}\label{gl68}
\frac{i_2\left(\cup_{M\in\mathcal{M}(x)}M\right)}{i(G)} < \left\{\begin{array}{ll}
12q^{-4} &\text{ for } n=6, \\[0.3cm]
12q^{-9} &\text{ for } n=8.
\end{array}\right.
\end{equation}

\begin{lemma}
\label{lem:psl}
If $\alpha\in \Inv(x)$, then $i_2(\C_G(\alpha))<\dfrac{21}{16}q^{\frac{n^2}{4}+\frac{n}{2}}$.
\end{lemma}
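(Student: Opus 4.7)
The plan is to run the same trichotomy as in Lemma \ref{icl4} for $n\in\{6,8\}$. Fix a cyclic maximal torus $\langle z\rangle\leq G$ of order $(q^n-1)/(q-1)$ containing $\langle x\rangle$, so that $\C_G(x)=\langle z\rangle$ and consequently $\N_{\Aut(G)}(\langle x\rangle)=\N_{\Aut(G)}(\langle z\rangle)$. Write
\[
N:=\N_{\Aut(G)}(\langle z\rangle)=\langle z\rangle.(\langle\varphi\rangle\times\langle\delta\rangle),
\]
where $\varphi$ is the Frobenius of $\F_{q^n}$ of order $nf$ with $z^{\varphi}=z^{2}$, and $\delta$ is the graph involution with $z^{\delta}=z^{-1}$. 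Thus $\Inv(x)\subseteq N$, and because $|z|=(q^n-1)/(q-1)$ is odd, the image $\bar\alpha$ of any $\alpha\in\Inv(x)$ in $\langle\varphi\rangle\times\langle\delta\rangle$ is an involution, so $\bar\alpha\in\{\delta,\,\varphi^{nf/2},\,\varphi^{nf/2}\delta\}$.

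The case $\bar\alpha=\varphi^{nf/2}\delta$ is eliminated directly: one computes $x^{\varphi^{nf/2}\delta}=x^{-q^{n/2}}$, so $x^{\alpha}=x^{-1}$ would force $r\mid q^{n/2}-1$, contradicting $r\in\ppd(2,nf)$. In the case $\bar\alpha=\delta$, the identity $\delta^{z^{j}}=z^{-2j}\delta$ together with the oddness of $|z|$ makes $\alpha$ $\langle z\rangle$-conjugate to $\delta$, so $\C_G(\alpha)\cong\C_G(\delta)\cong\C_{\Sp_n(q)}(t)$ by \cite[Proposition 3.2.11(iii)]{Burness2016}, where $t$ is a transvection. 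Since $q$ is even, all transvections in $\Sp_n(q)$ are conjugate, of which there are $q^n-1$, so $|\C_G(\delta)|=q^{n-1}|\Sp_{n-2}(q)|$ with shape $[q^{n-1}]{:}\Sp_{n-2}(q)$. A routine coset count then bounds
\[
i_2(\C_G(\delta))\leq q^{n-1}\bigl(1+i_2(\Sp_{n-2}(q))\bigr),
\]
and the Fulman--Guralnick estimate from \cite[Theorem 4.3]{Fulman2017} pushes this below $\frac{21}{16}q^{n^2/4+n/2}$ for all $q\geq 2$.

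In the remaining case $\bar\alpha=\varphi^{nf/2}$, the key observation is that $\varphi^{f}$ acts on $\langle z\rangle$ as $z\mapsto z^q$, matching the action of a Weyl-group element in $\N_G(\langle z\rangle)$; combined with $\C_{\Aut(G)}(\langle z\rangle)=\langle z\rangle\leq G$, this forces $\varphi^{f}\in G$ and hence $\alpha=z^k(\varphi^{f})^{n/2}\in G\subseteq\PGL_n(q)$. Lemma \ref{eml} is then applicable, yielding
\[
i_2(\C_G(\alpha))\leq i_2(\SL_{n/2}(q))\,q^{n^2/4-(n-2)}+q^{n^2/4}.
\]
For $n=6$ the closed form $i_2(\SL_3(q))=(q^2-1)(q^2+q+1)<q^4$ delivers a bound of order $q^9$; for $n=8$ a direct summation over the involution classes $j_1(4)$ and $j_2(4)$ in $\GL_4(q)$ yields $i_2(\SL_4(q))=O(q^7)$ and hence a bound of order $q^{17}$. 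Both are safely below $\frac{21}{16}q^{n^2/4+n/2}$ for $q\geq 2$.

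The main obstacle is Case 1, where the dominant contribution is of order $q^{n^2/4+n/2-1}$; the factor $q^{-1}$ gap is exactly what lets the constant $\frac{21}{16}$ survive down to $q=2$ once the Fulman--Guralnick estimate for $i_2(\Sp_{n-2}(q))$ is inserted. The inner case is by contrast controlled straightforwardly via Lemma \ref{eml}.
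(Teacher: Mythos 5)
Your route is genuinely different from the paper's. The paper splits on whether $\alpha$ lies in $\PGL(V)$ or not: for outer $\alpha$ it invokes Liebeck--Shalev's classification of centralizers of outer involutions ($\C_G(\alpha)$ of type $\PSL_n(q^{1/2})$, $\PSU_n(q^{1/2})$, or $\C_G(\alpha)\leq\Sp_n(q)$) and bounds each type, and for inner $\alpha$ it applies Lemma \ref{eml}. You instead transplant the torus-normalizer analysis that the paper reserves for $\PSL_4(q)$ (Lemma \ref{icl4}): pinning $\alpha$ down to one of three cosets of $\langle z\rangle$ in $\N_{\Aut(G)}(\langle z\rangle)$, eliminating $\varphi^{nf/2}\delta$, identifying $\varphi^{nf/2}$ as inner, and conjugating everything in $\langle z\rangle\delta$ to $\delta$. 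This buys a sharper description of $\Inv(x)$ (in particular it shows the field and graph-field cases of Liebeck--Shalev never actually occur here), at the cost of more delicate structural claims; the paper's version is more robust precisely because it never needs to know which involution $\alpha$ is.

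Two points need repair, though neither sinks the lemma. First, your identification $\C_G(\delta)\cong\C_{\Sp_n(q)}(t)$ is asserted, not proved: for $q$ even and $n$ even there are two classes of involutory graph automorphisms, with centralizers $\Sp_n(q)$ and $\C_{\Sp_n(q)}(t)$ respectively, and you do not check which class the torus-inverting $\delta$ belongs to for $n=6,8$ (the paper only cites \cite[Proposition 3.2.11(iii)]{Burness2016} for $n=4$). Your stated bound $q^{n-1}\bigl(1+i_2(\Sp_{n-2}(q))\bigr)$ would be wrong in the other case; the fix is simply to note that either way $\C_G(\delta)\leq\Sp_n(q)$, so $i_2(\C_G(\delta))<\tfrac{21}{16}q^{n^2/4+n/2}$ directly by \cite[Theorem 4.3]{Fulman2017} --- which is exactly the paper's fallback. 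Second, your numerics in the inner case are off: $i_2(\SL_3(q))=(q^3-1)(q+1)>q^4$ (it is $<2q^4$, not $<q^4$), and $i_2(\SL_4(q))$ is of order $q^8$, not $q^7$, so the $n=8$ bound from Lemma \ref{eml} is of order $q^{18}$, not $q^{17}$; both corrected values still sit comfortably below $\tfrac{21}{16}q^{n^2/4+n/2}$. Also, "$\varphi^f\in G$" should read $\varphi^f\in\PGL_n(q)$ (it is the Weyl element of the Singer torus modulo $\langle z\rangle$), which is all that Lemma \ref{eml} requires.
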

\begin{proof}
Let $\alpha\in \Inv(x)$. If $\alpha\in \Aut(G)\setminus\PGL(V)$, then by \cite[Proposition 4.4]{Liebeck1996}, we know that $\C_G(\alpha)$ is of type $\PSL_{n}(q^{\frac{1}{2}})$ or $\PSU_{n}(q^{\frac{1}{2}})$, or $\C_G(\alpha)\leq\Sp_{n}(q)$. If $\C_G(\alpha)$ is of type $\PSL_{n}(q^{\frac{1}{2}})$ or $\PSU_{n}(q^{\frac{1}{2}})$, then
\[
i_2(\C_G(\alpha))<3q^{\frac{n^2}{4}+\frac{n}{4}-\frac{1}{2}}<\frac{21}{16}q^{\frac{n^2}{4}+\frac{n}{2}}
\]
by \cite[Lemma 2.6]{Xia20202}. If $\C_G(\alpha)\leq\Sp_{n}(q)$, then
\[
i_2(\C_G(\alpha))<\frac{21}{16}q^{\frac{n^2}{4}+\frac{n}{2}}
\]
by \cite[Theorem 4.3]{Fulman2017}. If $\alpha\in \Aut(G)\cap\PGL(V)=\PGL_n(q)$, then by a straightforward computation using Lemma \ref{eml} we obtain
$$
i_2(\C_G(\alpha))\leq  2q^4\cdot q^5+q^9 < 3q^{9}< \frac{21}{16}q^{12}
$$
for $G=\PSL_{6}(q)$ and
$$
i_2(\C_G(\alpha))\leq  2q^8\cdot q^{10}+q^{16} < 3q^{18}< \frac{21}{16}q^{20}
$$
for $G=\PSL_{8}(q)$.
\end{proof}

Combining Lemma \ref{lem:psl} with the values of $i(G)$ and $u(G)$ in Table \ref{IU}, we obtain
$$
\frac{\sum_{\alpha\in \Inv(x)}i_2(\C_G(\alpha))}{i(G)} < \left\{\begin{array}{ll}
84q^{-1} &\text{ for } n=6, \\[0.3cm]
168q^{-5} &\text{ for } n=8.
\end{array}\right.
$$
This together with (\ref{e03}) and (\ref{gl68}) implies \eqref{eq:PL68}, completing the proof of Proposition \ref{PL68}.

\begin{remark}
By \cite[Theorem 1.1]{Spiga2018}, for $G=\PSL_6(q)$ with $\gcd(6,q-1)=3$, a cubic Cayley graph of $G$ with connection set $S$ is a GRR of $G$ if and only if $\langle S\rangle=G$ and $\Aut(G,S)=1$. (Here we do not require $S$ to contain an element of odd prime order.) Combining this with \cite[Theorem 1.1]{Leemans2017}, we see that $G$ can be proved to have cubic GRRs. The proof involves a connection set $\{z,z^{-1},y\}$, where $z$ is an element of order $(q^6-1)(q-1)^{-1}/3$ and $y$ is an involution. Thus $\PSL_{6}(q)$ with $q=2^f\leq 2^{6}$ has a cubic GRR when $f$ is even.
\end{remark}

\section{Symplectic groups}
\label{sec:sym}

In this section we prove the following proposition which confirms Theorems \ref{T0} and \ref{T} for $\PSp_6(q)$ and $\PSp_8(q)$.

\begin{prop}\label{PS68}
Let $G=\PSp_{n}(q)$ with $n=6\text{ or }8$, where $q=2^f$ with $f$ a positive integer, and let $x$ be any fixed element of $G$ with order $r\in \ppd (2,nf)$. Let $P(x)$ be the probability that $\Cay(G,\{x,x^{-1},y\})$ is a GRR of $G$ for a random involution $y$ of $G$. Then
\begin{equation}
\label{eq:PS68}
  P(x) > \left\{\begin{array}{ll}
1-57q^{-1} &\text{ for } n=6, \\[0.3cm]
1-8q^{-4}-68q^{-2} &\text{ for } n=8.
\end{array}\right.
\end{equation}
In particular, if $q\geq 2^{6}$ when $n = 6$ and $q \ge 2^{4}$ when $n = 8$, then $P(x)$ is positive and approaches $1$ as $q$ approaches infinity.
\end{prop}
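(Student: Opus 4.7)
The plan is to follow the same three-step scheme used in Section~\ref{sec:lin}: apply the general inequality \eqref{e03} together with the uniform bound \eqref{eq:uG} on $|\Inv(x)|$, and separately control the maximal-subgroup contribution $i_2(\cup_{M\in\mathcal{M}(x)}M)/i(G)$ and the centralizer contribution $\sum_{\alpha\in\Inv(x)}i_2(\C_G(\alpha))/i(G)$. Because $r\in\ppd(2,nf)$ and $q=2^f$, the element $x$ has order dividing $q^{n/2}+1$ and is contained in a cyclic (Singer-type) maximal torus of $G$ of that order, whose normalizer in $G$ has the order listed in Table~\ref{IU}.

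Step one is to enumerate $\mathcal{M}(x)$. Consulting \cite[Tables~8.28--8.29]{Bray2013} for $n=6$ and \cite[Tables~8.48--8.49]{Bray2013} for $n=8$, the requirement $r\mid|M|$ together with $r\in\ppd(2,nf)$ rules out every maximal subgroup of $G$ in the Aschbacher classes $\mathcal{C}_1,\mathcal{C}_2,\mathcal{C}_4,\mathcal{C}_5,\mathcal{C}_6,\mathcal{C}_7$, as well as every subgroup of class $\mathcal{S}$ when $n=8$ and every subgroup of class $\mathcal{S}$ except $\G_2(q)$ when $n=6$. In particular, $\rmO_6^+(q)$ and $\rmO_8^+(q)$ are excluded because no primitive prime divisor of $(2,nf)$ divides $q^i-1$ for the relevant exponents $i<n/2$ appearing in their orders. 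The surviving candidates for $n=6$ are of type $\Sp_2(q^3).3$, $\rmO_6^-(q)$ and $\G_2(q)$, while for $n=8$ they are of type $\Sp_2(q^4).4$, $\Sp_4(q^2).2$ and $\rmO_8^-(q)$. I would then apply \cite[Proposition~4.1, Corollary~6.2, Proposition~6.4]{King2017} to bound the number of each type of maximal subgroup containing $x$, and \cite[Theorem~4.3]{Fulman2017} (for the symplectic and orthogonal types) together with a direct count (for the field-extension types) to bound the number of involutions in each such subgroup. The expected order of magnitude is $i_2(\cup_{M\in\mathcal{M}(x)}M)=O(q^{11})$ for $n=6$ and $O(q^{16})$ for $n=8$, giving a contribution to \eqref{e03} of $O(q^{-1})$ and $O(q^{-4})$, respectively.

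Step two is to bound $i_2(\C_G(\alpha))$ uniformly in $\alpha\in\Inv(x)$. Since $\PSp_n(q)$ with $n\in\{6,8\}$ and $q$ even admits no graph automorphism, each such $\alpha$ is either an inner involution of $G$ or an involution induced by a field automorphism of order~$2$. In the inner case, $\alpha$ is a Suzuki-form involution of $\Sp_n(q)$ (see \cite[Section~4]{Aschbacher1976}), and a direct commutant calculation in the spirit of Lemma~\ref{eml}, combined with \cite[Theorem~4.3]{Fulman2017} applied to the symplectic part of the centralizer, yields an upper bound of order $q^{n^2/4+n/2}$. In the field-automorphism case $\C_G(\alpha)$ is of type $\Sp_n(q^{1/2})$ and \cite[Theorem~4.3]{Fulman2017} gives a bound of the same order of magnitude. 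Multiplying by the bound $u(G)$ on $|\Inv(x)|$ from Table~\ref{IU} and dividing by $i(G)$ contributes $O(q^{-1})$ when $n=6$ and $O(q^{-2})$ when $n=8$; adding this to the step-one estimate will yield \eqref{eq:PS68}.

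The main obstacle is the $\PSp_6(q)$ case, where the surplus over $i(G)=q^{12}$ is only a single power of~$q$. The dominant contribution in step one is that of the $\G_2(q)$-subgroups, whose involution count is of order $q^{10}$, so one must pin down the number of $\G_2(q)$-subgroups through $x$ to at most a small multiple of $|\N_G(\langle x\rangle)|/|\N_{\G_2(q)}(\langle x\rangle)|$ via \cite{King2017}; in step two the dominant contribution comes from the Suzuki-form class $j_3(6)$, whose centralizer in $\Sp_6(q)$ has order near $q^{12}$, so the explicit value $u(G)=25q^3$ from Table~\ref{IU} must be used rather than only its asymptotic order. The $\PSp_8(q)$ case is structurally identical but numerically more comfortable, because the largest surviving maximal subgroup $\Sp_4(q^2).2$ contributes only $O(q^{16})$ involutions, which is smaller than $q^{20}=i(G)$ by exactly the factor $q^{-4}$ appearing in \eqref{eq:PS68}.
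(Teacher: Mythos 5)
Your Step 1 is essentially the paper's: the surviving types for $n=6$ are $\Sp_2(q^3).3$, $\rmO_6^-(q)$ and the class-$\mathcal{S}$ subgroups $\G_2(q)$, and for $n=8$ the types $\Sp_4(q^2).2$ and $\rmO_8^-(q)$, counted via \cite[Corollary 6.2]{King2017} and \cite[Theorem 4.3]{Fulman2017}, giving $O(q^{11})$ and $O(q^{16})$ as you predict. Two small inaccuracies there: $\Sp_2(q^4).4$ is not a maximal $\mathcal{C}_3$ subgroup of $\Sp_8(q)$ (the field-extension degree must be prime), though including it only inflates an upper bound; and for $q=2$, $n=8$ the class $\mathcal{S}$ is not empty --- $\PSL_2(17)<\PSp_8(2)$ contains elements of order $17\in\ppd(2,8)$ --- but the asserted inequality is vacuous at $q=2$, so this costs nothing.

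The genuine gap is in Step 2, for inner $\alpha\in\Inv(x)$. You claim an upper bound on $i_2(\C_G(\alpha))$ ``of order $q^{n^2/4+n/2}$'' and then assert that multiplying by $u(G)$ and dividing by $i(G)$ gives $O(q^{-1})$ (resp.\ $O(q^{-2})$). These two statements are inconsistent: $q^{n^2/4+n/2}$ is the order of $i(G)$ itself, so your bound yields $u(G)\cdot q^{n^2/4+n/2}/i(G)=25q^{3}$ for $n=6$ and $34q^{4}$ for $n=8$, which diverges. What is actually needed is $i_2(\C_G(\alpha))=O(q^{8})$ for $n=6$ and $O(q^{14})$ for $n=8$, i.e.\ a saving of $q^{4}$, resp.\ $q^{6}$, over the trivial bound, and a generic ``Suzuki-form commutant computation'' does not deliver this: the bound depends heavily on which involution class $\alpha$ lies in (the centralizer of a transvection, for instance, contains an $\Sp_{n-2}(q)$ and has far more than $q^{8}$ involutions when $n=6$). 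The missing ingredients are (i) the structural fact that, because $x$ acts irreducibly on $V$ and $\alpha$ inverts $x$, the involution $\alpha$ is forced into the class $j_{n/2}(n)$ with smallest centralizer, and $\C_G(\alpha)$ is contained in the maximal parabolic $\PR_{n/2}=QL$ with $|Q|=q^{n(n+2)/8}$ and $L\cong\GL_{n/2}(q)$ (this is \cite[Table 12]{Xia20202}); and (ii) the count $i_2(QL)=i_2(Q)+\sum_{g\in\I_2(L)}|\C^*_Q(g)|$ with the bounds on $|\C^*_Q(g)|$ from the proof of \cite[Lemma 5.12]{Liebeck1996}, which yields $i_2(\C_G(\alpha))\le 2q^{8}$ ($n=6$) and $\le 2q^{14}$ ($n=8$) and hence the terms $50q^{-1}$ and $68q^{-2}$ in \eqref{eq:PS68}. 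Without (i) and (ii) the argument does not close, especially for $n=6$ where, as you note yourself, the margin over $i(G)=q^{12}$ is a single power of $q$.
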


In what follows we use the notation in Proposition \ref{PS68}. Since $\ppd (2,6) = \emptyset$, we may assume $q\geq 4$ in $\PSp_{6}(q)$. We first consider maximal subgroups containing $x$ in $\mathcal{C}_1, \ldots, \mathcal{C}_8$. By \cite[Proposition 4.1, Corollary 6.2, Proposition 6.4]{King2017}, we know that $\PSp_{6}(q)$ has at most one subgroup of type $\Sp_{2}(q^3).\Cy_3$ containing $x$ and at most two subgroups of type $\PSO^{-}_{6}(q)$ containing $x$, and that $\PSp_{8}(q)$ has at most one subgroup of type $\Sp_{4}(q^2).\Cy_2$ containing $x$ and at most two subgroups of type $\PSO^{-}_{8}(q)$ containing $x$. For maximal subgroups of $G$ in $\mathcal{S}$, according to \cite[Table 8.29, Table 8.49]{Bray2013}, \cite[Corollary 6.2]{King2017} and the value of $|\N_{G}(\langle x \rangle)|$ in \cite[Table 9]{King2017}, we know that $\PSp_{6}(q)$ has at most $6(q^3+1)$ subgroups of type $\G_{2}(q)$ containing $x$, and that $\PSp_{8}(q)$ has no subgroup in $\mathcal{S}$ containing $x$ when $q\geq 4$ and at most $16(q^4+1)/r$ subgroups of type $\PSL_{2}(17)$ containing $x$ when $q=2$ and $r=17$. Hence, by the upper bounds on the number of involutions in maximal subgroups of geometric type containing $x$ proved in \cite[Proposition 5.1]{King2017} and the bound $i_2(\G_{2}(q))<q^8+q^6$ given in \cite[Table 5]{Burness2018}, we have
\begin{align*}
i_2\left(\cup_{M\in\mathcal{M}(x)}M\right) & \leq  i_2(\Sp_2(q^3).\Cy_3)+2i_2(\PSO^{-}_{6}(q))+6(q^3+1)i_2(\G_{2}(q))\\
& <  2(q^3+1)q^3+4(q+1)q^{8}+6(q^3+1)(q^8+q^6)\\
& <  7q^{11}
\end{align*}
for $n=6$ and
\begin{align*}
i_2\left(\cup_{M\in\mathcal{M}(x)}M\right) & \leq  i_2(\Sp_4(q^2).\Cy_2)+2i_2(\PSO^{-}_{8}(q))+16i_2(\PSL_{2}(17))\\
& \leq  2(q^2+1)q^{10}+4(q+1)q^{15}+16\cdot 153\\
& <  7q^{16}+2448\\
& <  8q^{16}
\end{align*}
for $n=8$. Since $i(G)=q^{\frac{n^2}{4}+\frac{n}{2}}$ by Table \ref{IU}, it follows that
\begin{equation}\label{gp68}
\frac{i_2\left(\cup_{M\in\mathcal{M}(x)}M\right)}{i(G)} < \left\{\begin{array}{ll}
7q^{-1} &\text{ for } n=6, \\[0.3cm]
8q^{-4} &\text{ for } n=8.
\end{array}\right.
\end{equation}

\begin{lemma}\label{ics68}
If $\alpha\in \Inv(x)$, then
\begin{equation}
\label{eq:ics68}
i_2(\C_G(\alpha)) \leq \left\{\begin{array}{ll}
2q^{8} &\text{ for } n=6, \\[0.3cm]
2q^{14} &\text{ for } n=8.
\end{array}\right.
\end{equation}
\end{lemma}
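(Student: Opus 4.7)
The plan is to follow the structure of the proof of Lemma~\ref{lem:psl}, splitting $\alpha\in\Inv(x)$ according to whether it lies in $G$ or in $\Aut(G)\setminus G$. Since $q$ is even we have $\PSp_n(q)=\Sp_n(q)$, there are no diagonal automorphisms, and for $n\in\{6,8\}$ there is no graph automorphism either, so $\Aut(G)=G\rtimes\langle\varphi\rangle$ with $\varphi$ a field automorphism of order $f$. Hence every outer involution of $G$ is $G$-conjugate to a field involution.

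If $\alpha\in\Aut(G)\setminus G$, then necessarily $f$ is even, $q=q_0^2$ with $q_0=2^{f/2}$, and $\C_G(\alpha)\cong\Sp_n(q_0)$. Using \cite[Theorem 4.3]{Fulman2017} I would obtain
\[
i_2(\C_G(\alpha))<q_0^{n^2/4+n/2}=q^{(n^2+2n)/8},
\]
which is comfortably below the bounds in~\eqref{eq:ics68} for $n\in\{6,8\}$ (namely $q^6<2q^8$ when $n=6$ and $q^{10}<2q^{14}$ when $n=8$).

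If $\alpha\in G$, then $\alpha$ is an involution of $\Sp_n(q)$ inverting $x$. Since $r\in\ppd(2,nf)$, the element $x$ has minimal polynomial of degree $n$ on the natural module $V=\F_q^n$, so the argument used in the proof of Lemma~\ref{eml} shows that $\alpha$ has no eigenspace on $V$ of dimension exceeding $n/2$. The symplectic constraint combined with the equality case forces the fixed space of $\alpha$ to have dimension exactly $n/2$, whence $\alpha$ is conjugate in $\Sp_n(q)$ to an involution of top Aschbacher--Seitz type, whose block shape mirrors that of $j_{n/2}(n)$ in the linear case (up to a change of basis diagonalising the symplectic form). I would then parameterise the involutions in $\C_G(\alpha)$ by block matrices of the same shape as in Lemma~\ref{eml}, with the upper-left block $X$ taken from $\I_2(\GL_{n/2}(q))\cup\{I_{n/2}\}$ and the lower-left block $R$ subject both to the commutation relation $RX=XR$ and to an additional linear constraint imposed by the symplectic form. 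A careful enumeration of the pairs $(X,R)$ should then tighten the count from Lemma~\ref{eml} and yield the target bound $2q^{n^2/4-n/2+2}$, i.e.\ $2q^8$ when $n=6$ and $2q^{14}$ when $n=8$.

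The hard part will be this last Aschbacher--Seitz case analysis: a priori $\alpha$ could lie in any of the conjugacy classes $a_{n/2}$, $b_{n/2}$ or $c_{n/2}$ of involutions of $\Sp_n(q)$, and the enumeration of the pairs $(X,R)$ must be carried out separately for each class. It is the extra symplectic constraint on $R$ that sharpens the estimate relative to Lemma~\ref{eml}, but extracting this saving cleanly, while simultaneously respecting the commutation relation with $X$ and confirming which involution classes can actually be realised by an $\alpha$ inverting a ppd element, will require careful matrix computations with the alternating form.
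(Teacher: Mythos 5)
Your treatment of the outer (field-automorphism) case is sound and essentially what the paper does: $\C_G(\alpha)\cong\Sp_n(q^{1/2})$ and the involution count from \cite{Fulman2017} gives roughly $q^{(n^2+2n)/8}$, well inside the stated bounds. The gap is in the inner case, which is where all the content of the lemma lies. You correctly deduce that an inner $\alpha$ inverting $x$ has $(\alpha-1)V$ equal to its fixed space, of dimension exactly $n/2$ and totally isotropic, but then the entire quantitative step --- enumerating the pairs $(X,R)$ subject to the commutation relation and the symplectic constraint, across the possible Aschbacher--Seitz classes of $\alpha$, and showing the count lands at $2q^{8}$ resp.\ $2q^{14}$ --- is only announced, not carried out. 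This is not a routine verification: the linear-group count of Lemma~\ref{eml} gives roughly $\tfrac{3}{2}q^{18}$ for $n=8$, so the symplectic constraint has to buy you about a factor of $q^{4}$, and you would also have to determine which of the classes $a_{n/2}$, $b_{n/2}$, $c_{n/2}$ can actually occur and treat each separately. As it stands the proposal proves the easy half of the lemma and defers the hard half.

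For comparison, the paper never computes $\C_G(\alpha)$ in the inner case. It only uses the containment $\C_G(\alpha)\leq M=\PR_{n/2}=QL$ (the stabiliser of the totally isotropic fixed space, with $|Q|=q^{n(n+2)/8}$ and $L\cong\GL_{n/2}(q)$) and bounds $i_2(M)$ via the Levi-decomposition identity $i_2(M)=i_2(Q)+\sum_{g\in\I_2(L)}|\C^*_Q(g)|$, importing the upper bounds on $|\C^*_Q(g)|$ from the proof of \cite[Lemma 5.12]{Liebeck1996} and the class lengths in $\GL_{n/2}(q)$ from \eqref{cis}. This sidesteps any need to identify the conjugacy class of $\alpha$ or the precise structure of its centraliser, and it is exactly where the saving you are hoping for comes from: the small unipotent radical ($q^{10}$ rather than $q^{16}$ when $n=8$) and the bounds $|\C^*_Q(g)|\leq q^{7}, q^{6}$ make the dominant term $|x_2^{L}|\cdot|\C^*_Q(x_{2})|\leq q^{8}\cdot q^{6}=q^{14}$. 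If you want to complete your version, the cleanest fix is to adopt this overgroup argument rather than the direct matrix enumeration.
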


\begin{proof}
Let $\alpha\in \Inv(x)$. If $\alpha\in \Aut(G)\setminus\PGL(V)$, then by \cite[(19.2)]{Aschbacher1976} and \cite[Proposition 4.4]{Liebeck1996}, we know that $\alpha$ is a field automorphism and that $\C_G(\alpha)$ is of type $\PSp_{n}(q^{\frac{1}{2}})$. So it follows from \cite[Lemma 2.6]{Xia20202} that $i_2(\C_G(\alpha))<3q^{\frac{n^2}{8}+\frac{n}{4}}$. This inequality implies \eqref{eq:ics68} in the case when $n=6$ or $8$.

In the rest of the proof we assume that $\alpha\in \Aut(G)\cap\PGL(V)$. In this case, by \cite[Table 12]{Xia20202}, we know that $\C_G(\alpha)$ is contained in a maximal parabolic subgroup $M=\PR_{n/2}$ of $G$. We may write $\PR_{n/2}=QL$, where $|Q|=q^{\frac{n(n+2)}{8}}$ and $L\cong\GL_{n/2}(q)$. Similar to the proof of \cite[Lemma 5.12]{Liebeck1996}, we have
\begin{equation}
\label{eq:i2M}
i_2(M)=i_2(Q)+\sum\limits_{g \in \I_2(L)}|\C^*_Q(g)|,
\end{equation}
where $\C^*_Q(g)=\{u\in Q: gug=u^{-1}\}$ for $g \in \I_2(L)$.

Case 1: $n = 6$.

In this case we have $M=\PR_3=QL\leq\PSp_{6}(q)$, where $|Q|=q^{6}$ and $L\cong\GL_3(q)$, and the involution class in $L$ is represented by the matrix
\[
x_{1}=\left(
  \begin{array}{cc}
    j_{1}(3) & O \\
    O & j_{1}(3)^{\T}
  \end{array}
\right).
\]
Recall that $j_{1}(3)$ is the involution of the Suzuki form in $\GL_3(q)$. By the upper bound(s) on $|\C^*_Q(g)|$ for involutions in distinct conjugacy class(es) in $L$ given in the eleventh line on page 109 in the proof of \cite[Lemma 5.12]{Liebeck1996}, we obtain that $|\C^*_Q(x_{1})|\leq q^{4}$. On the other hand, since $L\cong \GL_3(q)$, we have $i_2(L)\leq 3q^4/2$. Thus, by \eqref{eq:i2M}, we have
\[
i_2(M)\leq (q^{6}-1)+3q^{8}/2\leq 2q^{8}.
\]

Case 2: $n = 8$.

In this case we have $M=\PR_4=QL\leq\PSp_{8}(q)$, where $|Q|=q^{10}$ and $L\cong\GL_4(q)$, and the involution classes in $L$ are represented by the matrices $x_{r}=x_{1}, x_{2}$, where
\[
x_{r}=\left(
  \begin{array}{cc}
    j_{r}(4) & O \\
    O & j_{r}(4)^{\T}
  \end{array}
\right).
\]
Similar to Case 1, by the above-mentioned upper bounds on $|\C^*_Q(g)|$ produced in the proof of \cite[Lemma 5.12]{Liebeck1996}, we obtain
\begin{align*}
|\C^*_Q(x_{r})|\leq\left\{\begin{array}{ll}
q^{7} &\text{ for } r=1, \\[0.3cm]
q^{6} &\text{ for } r=2.
\end{array}\right.
\end{align*}
Computing the length of each class of involutions in $\GL_4(q)$ by (\ref{cis}) as in the proof of Lemma \ref{lem:24}, we obtain $ |x_{1}^{L}|\leq 7q^6/4$ and $|x_{2}^{L}|\leq q^8$. Thus, by \eqref{eq:i2M}, we have
\[
i_2(M)\leq (q^{10}-1)+7q^{13}/4+q^{14}\leq 2q^{14}.
\]

In summary, in the case when $\alpha\in \Aut(G)\cap\PGL(V)$, we have proved that $i_2(M)\leq 2q^{8}$ for $n=6$ and $i_2(M)\leq 2q^{14}$ for $n=6$. Since $\C_G(\alpha)$ is contained in $M$, we have $i_2(\C_G(\alpha))\leq i_2(M)$ and hence \eqref{eq:ics68} follows immediately.
\end{proof}

Combining Lemma \ref{ics68} with the values of $i(G)$ and $u(G)$ in Table \ref{IU}, we obtain
\begin{equation*}
\frac{\sum_{\alpha\in \Inv(x)}i_2(\C_G(\alpha))}{i(G)} \leq \left\{\begin{array}{ll}
50q^{-1} &\text{ for } n=6, \\[0.3cm]
68q^{-2} &\text{ for } n=8.
\end{array}\right.
\end{equation*}
This together with (\ref{e03}) and (\ref{gp68}) implies \eqref{eq:PS68}, completing the proof of Proposition \ref{PS68}.

\section{Orthogonal groups}
\label{sec:ortho}

\subsection{$G=\PR\Omega_{n}^{+}(q)$ with $n=8, 10\text{ or }12$, where $q=2^f$}

In this subsection we prove the following result which confirms Theorems \ref{T0} and \ref{T} for $\PR\Omega_{8}^{+}(q)$, $\PR\Omega_{10}^{+}(q)$ and $\PR\Omega_{12}^{+}(q)$.

\begin{prop}\label{PO+}
Let $G=\PR\Omega_{n}^{+}(q)$ with $n=8, 10\text{ or }12$, where $q=2^f$ with $f$ a positive integer, and let $x$ be any fixed element of $G$ with order $r\in \ppd (2,(n-2)f)$. Let $P(x)$ be the probability that $\Cay(G,\{x,x^{-1},y\})$ is a GRR of $G$ for a random involution $y$ of $G$. Then
\begin{equation}
\label{eq:PO+}
  P(x) \geq \left\{\begin{array}{ll}
1-24q^{-3}-244q^{-2} &\text{ for } n=8, \\[0.3cm]
1-20q^{-3}-612q^{-2} &\text{ for } n=10, \\[0.3cm]
1-20q^{-5}-744q^{-6} &\text{ for } n=12.
\end{array}\right.
\end{equation}
In particular, if $q\geq 2^{4}, 2^{5}, 2^{2}$ when $n=8, 10, 12$, respectively, then $P(x)$ is positive and approaches $1$ as $q$ approaches infinity.
\end{prop}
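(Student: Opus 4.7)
The plan is to follow the template established in Propositions \ref{PL4}, \ref{PL68} and \ref{PS68}: apply the estimate \eqref{e03} and bound separately the contribution $i_2(\cup_{M\in\mathcal{M}(x)}M)$ from maximal overgroups of $\langle x\rangle$ and the contribution $\sum_{\alpha\in\Inv(x)} i_2(\C_G(\alpha))$ from inverting automorphisms. The key structural observation is that, since $n\in\{8,10,12\}$ is even and $r\in\ppd(2,(n-2)f)$ divides $q^{n-2}-1$ but no smaller $q^i-1$, the prime $r$ must in fact divide $q^{(n-2)/2}+1$. This forces $x$ to lie in a very specific maximal torus of $G$, which is consistent with the value of $|\N_G(\langle x\rangle)|$ listed in Table \ref{IU}.

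First I would enumerate the maximal subgroups of $G$ containing $x$. Combining the lists in \cite[Tables 8.50, 8.65, 8.82]{Bray2013} with the Aschbacher-class counts in \cite[Proposition 4.1, Corollary 6.2, Proposition 6.4]{King2017}, the only candidates whose order is divisible by $r$ are (i) stabilizers of a non-degenerate $2$-space of type $\Om_{n-2}^{-}(q)\perp\Om_2^{-}(q)$, (ii) field-extension subgroups of type $\GU_{(n-2)/2}(q).2$, and (iii) in the $n=8$ case the $\Om_7(q)$-type overgroups arising from triality. Each family contributes at most a small polynomial-in-$q$ number of $G$-conjugates containing $x$, bounded by $|\N_G(\langle x\rangle)|$ divided by the relevant normalizer inside the subgroup, and the involution counts in these subgroups are controlled by \cite[Theorem 4.3]{Fulman2017}, \cite[Table 3.5.1]{Burness2016} and \cite[Table 5]{Burness2018}. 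Summing yields an aggregate bound $a(q)$ which, when divided by $i(G)=q^{n^2/4}/2$, reproduces the first terms $24q^{-3}, 20q^{-3}, 20q^{-5}$ in \eqref{eq:PO+}.

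Next I would bound $i_2(\C_G(\alpha))$ for $\alpha\in\Inv(x)$. If $\alpha\notin \PGL(V)$, then \cite[(19.2)]{Aschbacher1976} and \cite[Proposition 4.4]{Liebeck1996} identify $\alpha$ as a field automorphism (or, only for $n=8$, possibly a triality-type automorphism) whose centralizer is of type $\PR\Omega_n^{\pm}(q^{1/2})$ or $\G_2(q)$, and \cite[Lemma 2.6]{Xia20202} together with the bounds in \cite[Table 3.5.1]{Burness2016} and \cite[Table 5]{Burness2018} shows that this contribution is negligible compared with the target. If $\alpha\in \PGL(V)$, then by \cite[Table 12]{Xia20202} the centralizer $\C_G(\alpha)$ is contained in a parabolic $M=\PR_{n/2}=QL$ with $L\cong\GL_{n/2}(q)$ and $|Q|$ an explicit power of $q$. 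Arguing exactly as in Lemma \ref{ics68} via the identity
\[
i_2(M)=i_2(Q)+\sum_{g\in\I_2(L)}|\C^*_Q(g)|,
\]
and invoking the upper bounds on $|\C^*_Q(g)|$ from the proof of \cite[Lemma 5.12]{Liebeck1996} together with the Suzuki-form class sizes \eqref{cis}, I obtain an upper bound $b(q)$ such that $u(G)b(q)/i(G)$ reproduces the second term in each case of \eqref{eq:PO+}. Substituting both estimates into \eqref{e03} gives \eqref{eq:PO+}, and the positivity thresholds on $q$ follow by straightforward numerical inspection.

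The main obstacle is the inner case just described: $\PR\Omega_{2m}^{+}(q)$ carries three distinct inner-involution classes $a_m, b_m, c_m$ (in the notation of \cite[Section 3.5.4]{Burness2016}) whose centralizers all sit in the same parabolic $\PR_m$, and they must be distinguished carefully to avoid over-counting. A further subtlety, absent from the linear and symplectic cases, is the presence of the graph (and, for $n=8$, triality) outer automorphisms of $G$, which enlarge $\Inv(x)$ and therefore require the sharper normalizer-based accounting already built into the bound \eqref{eq:uG}.
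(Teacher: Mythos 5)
Your overall strategy coincides with the paper's: feed King's counts of maximal overgroups and a parabolic-containment bound on $i_2(\C_G(\alpha))$ into \eqref{e03}, then divide by $i(G)=q^{n^2/4}/2$. There is, however, a concrete gap in your treatment of the inner case. You assert that for every $\alpha\in\Inv(x)\cap\PGL(V)$ the centralizer $\C_G(\alpha)$ lies in the parabolic $\PR_{n/2}$ with Levi $\GL_{n/2}(q)$. That is not what \cite[Table 13]{Xia20202} (the relevant table for $\PR\Omega_{n}^{+}(q)$; Table 12 is the symplectic one) provides: the centralizer may instead lie only in $\PR_{n/2-1}$, whose Levi sits inside $\GL_{n/2-1}(q)\times \Om_{2}^{+}(q)$, and the count
$i_2(M)=i_2(Q)+\sum_{g\in\I_2(L)}|\C^*_Q(g)|$
has to be run separately for $m=n/2-1$ and $m=n/2$, since $|Q|=q^{m(n-3m/2-1/2)}$ and the relevant Suzuki-form classes differ substantially between the two (for $n=8$ one gets $|Q|=q^{9}$ versus $q^{6}$). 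As it happens the $\PR_{n/2}$ subcase dominates numerically for all three values of $n$, so your final constants would survive, but your argument as written simply does not cover those $\alpha$ whose centralizers are contained only in $\PR_{n/2-1}$; that second subcase must be carried out.

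Two smaller inaccuracies. First, the unitary $\mathcal{C}_3$-overgroups of $x$ are of type $\GU_{n/2}(q).\Cy_2$ (present only when $n/2$ is even, i.e.\ $n=8,12$), not $\GU_{(n-2)/2}(q).\Cy_2$; and for $n=8$ the overgroups with socle $\PSU_3(q)$ must be counted alongside those with socle $\PSp_6(q)$ and the $\rmO_7(q)$-type subgroups. Second, an involutory outer automorphism cannot be a triality (triality has order $3$), so a $\G_2(q)$ centralizer does not arise here; the paper disposes of all $\alpha\in\Inv(x)\setminus\PGL(V)$ at once via \cite[Lemma 4.1]{Xia20202}, which gives $i_2(\C_G(\alpha))<3q^{n^2/8}$, and this is what makes that contribution negligible. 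Your conclusion for that case is right in substance, but the stated justification is off.
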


We use the notation in Proposition \ref{PO+} in the following proof of this result. Since $\ppd (2,6) = \emptyset$, we may assume $q\geq 4$ in $\PR\Omega_{8}^{+}(q)$. As shown in Table \ref{O+}, for every maximal subgroup $M$ of $G$ containing $x$, by \cite[Tables 6, 10]{King2017}, \cite[Table 2]{Xia20202} and \cite[Proposition 2.3]{King2017}, we know the number $c_M$ of $G$-conjugacy classes of each type, an upper bound $m_M$ on $|\N_G(\langle x \rangle)|/|\N_M(\langle x \rangle)|$ and an upper bound $i_M$ on $i_2(M)$. This together with \cite[Corollary 6.2]{King2017} implies
\begin{equation*}
i_2\left(\cup_{M\in\mathcal{M}(x)}M\right)\leq \left\{\begin{array}{ll}
12q^{13} &\text{ for } n=8 , \\[0.3cm]
10q^{21} &\text{ for } n=10 , \\[0.3cm]
10q^{31} &\text{ for } n=12 .
\end{array}\right.
\end{equation*}
Combining this with the value of $i(G)$ in Table \ref{IU}, we obtain
\begin{equation}\label{go1+}
\frac{i_2\left(\cup_{M\in\mathcal{M}(x)}M\right)}{i(G)} \leq
\left\{\begin{array}{ll}
24q^{-3} &\text{ for } n=8 , \\[0.3cm]
20q^{-3} &\text{ for } n=10 , \\[0.3cm]
20q^{-5} &\text{ for } n=12 .
\end{array}\right.
\end{equation}

\begin{table}[t]
\centering
		\begin{tabular}{l|l|l|l|l|l}
			\hline
			$G$ & Type of $M$ & $c_M$ & $m_M$ & $i_M$ & Conditions\\ \hline
            \multirow{5}*{$\PR\Omega_{8}^{+}(q)$}& $\rmO_6^{-}(q)\times \rmO_2^{-}(q)$ & $1$ &  $1$ & $4q^{10}$ & ~\\
            ~ & $\rmO_7(q)$ & $1$ & $q+1$ & $3q^{12}$ & ~\\
            ~ & $\GU_4(q).\Cy_2$ & $2$ & $1$ &  $2q^{10}$ & ~\\
            ~ & $\Soc(M)=\PSp_{6}(q)$ & $2$ &  $q+1$ & $3q^{12}$ & ~\\
            ~ & $\Soc(M)=\PSU_3(q)$ & $1$  &  $2(q+1)^2$ & $3q^{5}$ & $q\equiv 2\pmod 3$ and $q\neq 2$\\\hline
            \multirow{2}*{$\PR\Omega_{10}^{+}(q)$}& $\rmO_8^{-}(q)\times \rmO_2^{-}(q)$ & $1$ & $1$ & $5q^{17}$ & ~\\
            ~ & $\rmO_9(q)$ & $1$ & $2(q+1)$ & $3q^{20}$ & ~\\ \hline
            \multirow{3}*{$\PR\Omega_{12}^{+}(q)$}& $\rmO_{10}^{-}(q)\times \rmO_2^{-}(q)$ & $1$ & $1$ & $5q^{26}$ & ~\\
            ~ & $\rmO_{11}(q)$ & $1$ & $2(q+1)$ & $3q^{30}$ & ~\\
            ~ & $\GU_6(q).\Cy_2$ & $2$ & $1$ & $5q^{21}$ & ~\\ \hline
		\end{tabular}
\caption{The tuples $(M, c_M, m_M, i_M)$ for some $\PR\Omega_{n}^{+}(q)$ }
\label{O+}
\end{table}

\begin{lemma}\label{ico+}
If $\alpha\in \Inv(x)$, then
\begin{equation}\label{to1+0}
i_2(\C_G(\alpha)) \leq \left\{\begin{array}{ll}
2q^{10} &\text{ for } n=8, \\[0.3cm]
3q^{17} &\text{ for } n=10, \\[0.3cm]
3q^{24} &\text{ for } n=12.
\end{array}\right.
\end{equation}
\end{lemma}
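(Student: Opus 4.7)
The plan is to adapt the two-case strategy of Lemma \ref{ics68} to the even-dimensional hyperbolic orthogonal setting, splitting $\alpha \in \Inv(x)$ according to whether it lies in $\Aut(G) \cap \PGL(V)$ or in $\Aut(G) \setminus \PGL(V)$.

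First, suppose $\alpha \in \Aut(G) \setminus \PGL(V)$, so $\alpha$ involves a field, graph or graph-field automorphism (and, when $n=8$, possibly a triality component). The centralisers of such involutory automorphisms are classified in \cite[(19.2)]{Aschbacher1976}, \cite[Proposition 4.4]{Liebeck1996} and \cite[Section 3.5.4]{Burness2016}: $\C_G(\alpha)$ is of type $\PR\Omega_n^{\pm}(q^{1/2})$, $\Om_{n-1}(q)$, $\Sp_{n-2}(q)$, or (only when $n=8$) one of a small number of additional triality-related types. Each such group has order at most a constant times $q^{n(n-1)/4}$, so \cite[Lemma 2.6]{Xia20202} (which bounds the involution count of a classical group by roughly the square-root of its order up to a polynomial factor) yields $i_2(\C_G(\alpha)) = O(q^{n^2/8 + n/4})$. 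A direct check shows this is well below $2q^{10}$, $3q^{17}$, $3q^{24}$ for $n=8,10,12$ respectively.

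The substantive case is $\alpha \in \Aut(G) \cap \PGL(V)$, where $\alpha$ is induced by an inner-diagonal involution. By the orthogonal analogue of \cite[Table 12]{Xia20202}, $\C_G(\alpha)$ is contained in the maximal parabolic $M = \PR_{n/2}$ stabilising a maximal totally singular subspace. Writing $M = QL$ with unipotent radical $Q$ of order $q^{n(n-2)/8}$ and Levi factor $L$ isomorphic, modulo small central factors, to $\GL_{n/2}(q)$, the idea is to apply the identity
\[
i_2(M) = i_2(Q) + \sum_{g \in \I_2(L)} |\C^*_Q(g)|,
\]
with $\C^*_Q(g) = \{u \in Q : gug = u^{-1}\}$. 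The involution classes of $L$ are represented by the Suzuki forms $j_\ell(n/2)$ for $1 \le \ell \le n/4$; their class sizes are read off from (\ref{cis}), and $|\C^*_Q(g)|$ is bounded by the method of \cite[Lemma 5.12]{Liebeck1996} adapted to the orthogonal unipotent radical (the commutation constraints $Rg = gR$ for block matrices $R$ inside $Q$ are analogous to those worked out in the proof of Lemma \ref{eml}). Summing class by class then yields the bounds (\ref{to1+0}).

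The main obstacle lies in Case 2: one must verify that the orthogonal analogue of \cite[Table 12]{Xia20202} really does confine $\C_G(\alpha)$ to the claimed parabolic, determine how each representative $j_\ell(n/2)$ acts on $Q$, and extract sharp enough estimates on $|\C^*_Q(g)|$. The dominant contribution typically comes from $\ell = 1$, where the Levi centraliser is largest, and confirming that this leading term together with the remaining ones stays below the clean bounds $2q^{10}$, $3q^{17}$, $3q^{24}$ is the delicate bookkeeping that drives the three separate cases $n = 8, 10, 12$.
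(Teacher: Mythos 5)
Your overall strategy is the same as the paper's (split on $\alpha\in\Aut(G)\cap\PGL(V)$ versus not, confine $\C_G(\alpha)$ to a maximal parabolic $M=QL$, and bound $i_2(M)=i_2(Q)+\sum_{g\in \I_2(L)}|\C^*_Q(g)|$ class by class via \cite[Lemma 5.12]{Liebeck1996} and (\ref{cis})), but there are two concrete gaps. The more serious one is in your inner-diagonal case: the relevant table (\cite[Table 13]{Xia20202}) does \emph{not} confine $\C_G(\alpha)$ to the parabolic $\PR_{n/2}$ stabilising a maximal totally singular subspace; it also allows $M=\PR_{n/2-1}$, whose Levi factor is (roughly) $\GL_{n/2-1}(q)\times\Om_2^{+}(q)$ and whose unipotent radical is strictly larger (e.g.\ $|Q|=q^{9}$ versus $q^{6}$ for $n=8$). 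Your argument as written silently drops this case, so the involution representatives $\diag(j_r(n/2-1),a_0(2),j_r(n/2-1)^{\T})$ and the corresponding $|\C^*_Q(g)|$ estimates are never examined; the paper needs a separate subcase for each of $m=n/2-1$ and $m=n/2$ for every $n$, and it is not a priori clear which parabolic dominates (for $n=8$ the $\PR_{4}$ subcase gives the larger bound $2q^{10}$, for $n=10$ and $12$ it is again $\PR_{n/2}$, but only after computing both).

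The second gap is in your outer case: the exponent you quote, $n^2/8+n/4$, is not good enough when $n=8$, since a bound of the shape $cq^{10}$ with $c\geq 2$ is \emph{not} "well below" the target $2q^{10}$. The correct input here is \cite[Lemma 4.1]{Xia20202}, which gives $i_2(\C_G(\alpha))<3q^{n^2/8}$ (so $3q^{8}$ for $n=8$), comfortably inside all three bounds. With that citation substituted and the $\PR_{n/2-1}$ subcases added (carrying out the same $|\C^*_Q(x_r)|$ and class-length bookkeeping you describe), your outline matches the paper's proof.
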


\begin{proof}
Let $\alpha\in \Inv(x)$. If $\alpha\in \Aut(G)\setminus\PGL(V)$, then by \cite[Lemma 4.1]{Xia20202}, we have
\begin{equation*}\label{to1+01}
i_2(\C_G(\alpha))<3q^{n^2/8},
\end{equation*}
which implies \eqref{to1+0}.

In the remainder of the proof we assume that $\alpha\in \Aut(G)\cap\PGL(V)$. In this case, by \cite[Table 13]{Xia20202}, we know that $\C_G(\alpha)$ is contained in a maximal subgroup $M=\PR_{m}$ of $G$, where $m=n/2-1\text{ or }n/2$. We may write $M=QL$, where $|Q|=q^{m(n-3m/2-1/2)}$ and $L\lesssim\GL_m(q)\times \Om_{n-2m}^{+}(q)$. Similar to the proof of \cite[Lemma 5.12]{Liebeck1996}, we have
\begin{equation}\label{iMP}
i_2(M)=i_2(Q)+\sum\limits_{g \in \I_2(L)}|\C^*_Q(g)|,
\end{equation}
where $\C^*_Q(g)=\{u\in Q: gug=u^{-1}\}$ for $g \in \I_2(L)$. Since $q$ is even, we have $\Om_{2}^{+}(q)\cong C_{q-1}$  and hence $i_2(\Om_{2}^{+}(q))=0$. Since $\C_G(\alpha)$ is contained in $M$, we have $i_2(\C_G(\alpha)) \leq i_2(M)$. Thus, it suffices to prove that the right-hand side of \eqref{to1+0} is an upper bound on $i_2(M)$, and this is what we aim to achieve in the following.

Case 1: $G=\PR\Omega_{8}^{+}(q)$.

In this case we have $m = 3$ or $4$ as $n = 8$.

Subcase 1.1: $m = 3$. In this case we have $M=\PR_3=QL$, where $|Q|=q^{9}$ and $L\lesssim\GL_3(q)\times \Om_{2}^{+}(q)$, and the involution classes in $L$ are represented by the matrix
\[
x_{1}=\left(
  \begin{array}{ccc}
    j_{1}(3) & O & O\\
    O & a_{0}(2) & O\\
    O & O & j_{1}(3)^{\T}
  \end{array}
\right).
\]
Here $j_{1}(3)$ is again the Suzuki form as mentioned before, while $a_{0}(2)$ represents the identity in $\Om_{2}^{+}(q)$. By the upper bound(s) on $|\C^*_Q(g)|$ for involutions in distinct conjugacy class(es) in $L$ as given in the sixth line from the bottom on page 110 in the proof of \cite[Lemma 5.12]{Liebeck1996}, we obtain $|\C^*_Q(x_{1})| \leq q^{5}$. Since $i_2(\GL_3(q))\leq 3q^4/2$, we have $i_2(L)\leq 3q^4/2$. Thus, by \eqref{iMP},
$i_2(M)\leq (q^{9}-1)+3q^{9}/2\leq 3q^{9} < 2q^{10}$ as required.

Subcase 1.2: $m = 4$. In this case we have $M=\PR_4=QL$, where $|Q|=q^{6}$ and $L\lesssim\GL_4(q)$, and the involution classes in $L$ are represented by the matrices $x_{r}=x_{1}, x_{2}$, where
\[
x_{r}=\left(
  \begin{array}{cc}
    j_{r}(4) & O \\
    O & j_{r}(4)^{\T}
  \end{array}
\right).
\]
By the sixth line from the bottom on page 110 in the proof of \cite[Lemma 5.12]{Liebeck1996}, we have
\begin{align*}
|\C^*_Q(x_{r})|\leq\left\{\begin{array}{ll}
q^{3} &\text{ for } r=1, \\[0.3cm]
q^{2} &\text{ for } r=2.
\end{array}\right.
\end{align*}
Computing the length of each class of involutions in $\GL_4(q)$ by (\ref{cis}), we obtain $|x_{1}^{L}|\leq 2q^6$ and $|x_{2}^{L}|\leq q^8$. Plugging these into \eqref{iMP}, we obtain $i_2(M)\leq (q^6-1)+2q^9+q^{10}\leq 2q^{10}$ as required.

Case 2. $G=\PR\Omega_{10}^{+}(q)$.

In this case we have $m = 4$ or $5$ as $n=10$.

Subcase 2.1: $m = 4$. In this case we have $M=\PR_4=QL$, where $|Q|=q^{14}$ and $L\lesssim\GL_4(q)\times \Om_{2}^{+}(q)$, and the involution classes in $L$ are represented by the matrices $x_{r}=x_{1}, x_{2}$, where
\[
x_{r}=\left(
  \begin{array}{ccc}
    j_{r}(4) & O & O\\
    O & a_{0}(2) & O\\
    O & O & j_{r}(4)^{\T}
  \end{array}
\right).
\]
Similar to Case 1, we have
\begin{align*}
|\C^*_Q(x_{r})|\leq \left\{\begin{array}{ll}
q^{9} &\text{ for } r=1, \\[0.3cm]
q^{6} &\text{ for } r=2.
\end{array}\right.
\end{align*}
Considering the length of each class of involutions in $\GL_4(q)$, we obtain $|x_{1}^{L}|\leq 2q^6$ and $|x_{2}^{L}|\leq q^8$. This together with \eqref{iMP} yields $i_2(M)\leq (q^{14}-1)+2q^{15}+q^{14}\leq 3q^{15} < 3q^{17}$ as desired.

Subcase 2.2: $m = 5$. In this case we have $M=\PR_5=QL$, where $|Q|=q^{10}$ and $L\lesssim\GL_5(q)$, and the involution classes in $L$ are represented by the matrices $x_{r}=x_{1}, x_{2}$, where
\[
x_{r}=\left(
  \begin{array}{cc}
    j_{r}(5) & O \\
    O & j_{r}(5)^{\T}
  \end{array}
\right).
\]
Similar to Case 1, we have
\begin{align*}
|\C^*_Q(x_{r})|\leq\left\{\begin{array}{ll}
q^{6} &\text{ for } r=1, \\[0.3cm]
q^{5} &\text{ for } r=2.
\end{array}\right.
\end{align*}
Computing the length of each class of involutions in $\GL_5(q)$ by (\ref{cis}), we obtain
$|x_{1}^{L}|\leq 2q^8$ and $|x_{2}^{L}|\leq 2q^{12}$. Plugging these into \eqref{iMP}, we obtain $i_2(M)\leq (q^{14}-1)+2q^{14}+2q^{17}\leq 3q^{17}$ as desired.

Case 3. $G=\PR\Omega_{12}^{+}(q)$.

In this case we have $m = 5$ or $6$ as $n = 12$.

Subcase 3.1: $m = 5$. In this case we have $M=\PR_5=QL$, where $|Q|=q^{20}$ and $L\lesssim\GL_5(q)\times \Om_{2}^{+}(q)$, and the involution classes in $L$ are represented by the matrices $x_{r}=x_{1}, x_{2}$, where
\[
x_{r}=\left(
  \begin{array}{ccc}
    j_{r}(5) & O & O\\
    O & a_{0}(2) & O\\
    O & O & j_{r}(5)^{\T}
  \end{array}
\right).
\]
Similar to Case 1, we have
\begin{align*}
|\C^*_Q(x_{r})|\leq \left\{\begin{array}{ll}
q^{14} &\text{ for } r=1, \\[0.3cm]
q^{10} &\text{ for } r=2.
\end{array}\right.
\end{align*}
Considering the length of each class of involutions in $\GL_5(q)$, we obtain $|x_{1}^{L}|\leq 2q^8$ and $|x_{2}^{L}|\leq 2q^{12}$. Combining these with \eqref{iMP}, we obtain $i_2(M)\leq (q^{20}-1)+2q^{22}+2q^{22}\leq 5q^{22} < 3q^{24}$ as required.

Subcase 3.2: $m = 6$. In this case we have $M=\PR_6=QL$, where $|Q|=q^{15}$ and $L\lesssim\GL_6(q)$, and the involution classes in $L$ are represented by the matrices $x_{r}=x_{1}, x_{2}, x_{3}$, where
\[
x_{r}=\left(
  \begin{array}{cc}
    j_{r}(6) & O \\
    O & j_{r}(6)^{\T}
  \end{array}
\right).
\]
Similar to Case 1, we have
\begin{align*}
|\C^*_Q(x_{r})|\leq\left\{\begin{array}{ll}
q^{10} &\text{ for } r=1, \\[0.3cm]
q^{7} &\text{ for } r=2, \\[0.3cm]
q^{6} &\text{ for } r=3.
\end{array}\right.
\end{align*}
Computing the length of each class of involutions in $\GL_6(q)$ by (\ref{cis}), we obtain $|x_{1}^{L}|\leq 2q^{10}$, $|x_{2}^{L}|\leq 3q^{16}$ and $|x_{3}^{L}|\leq q^{18}$. Combining these with \eqref{iMP}, we obtain $i_2(M)\leq (q^{15}-1)+2q^{20}+3q^{23}+q^{24}\leq 3q^{24}$ as required.
\end{proof}

Combining Lemma \ref{ico+} with the values of $i(G)$ and $u(G)$ in Table \ref{IU}, we obtain
\begin{equation*}
\frac{\sum_{\alpha\in \Inv(x)}i_2(\C_G(\alpha))}{i(G)} \leq \left\{\begin{array}{ll}
244q^{-2} &\text{ for } n=8, \\[0.3cm]
612q^{-2} &\text{ for } n=10, \\[0.3cm]
744q^{-6} &\text{ for } n=12.
\end{array}\right.
\end{equation*}
This together with (\ref{e03}) and (\ref{go1+}) implies \eqref{eq:PO+}, completing the proof of Proposition \ref{PO+}.

\subsection{$G=\PR\Omega_{n}^{-}(q)$ with $n=8, 10\text{ or }12$, where $q=2^f$}

In this subsection we aim to prove the following result which confirms Theorems \ref{T0} and \ref{T} for $\PR\Omega_{8}^{-}(q)$, $\PR\Omega_{10}^{-}(q)$ and $\PR\Omega_{12}^{-}(q)$.

\begin{prop}\label{PO-}
Let $G=\PR\Omega_{n}^{-}(q)$ with $n=8, 10\text{ or }12$, where $q=2^f$ with $f\geq2$, and let $x$ be any fixed element of $G$ with order $r\in \ppd (2,nf)$. Let $P(x)$ be the probability that $\Cay(G,\{x,x^{-1},y\})$ is a GRR of $G$ for a random involution $y$ of $G$. Then
\begin{equation}
\label{eq:PO-}
  P(x) \geq \left\{\begin{array}{ll}
1-6q^{-8}-204q^{-3} &\text{ for } n=8, \\[0.3cm]
1-12q^{-9}-252q^{-4} &\text{ for } n=10, \\[0.3cm]
1-8q^{-18}-490q^{-8} &\text{ for } n=12.
\end{array}\right.
\end{equation}
In particular, if $q\geq 2^{3}, 2^{2}, 2^{2}$ when $n=8, 10, 12$, respectively, then $P(x)$ is positive and approaches $1$ as $q$ approaches infinity.
\end{prop}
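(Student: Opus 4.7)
The plan is to prove Proposition \ref{PO-} via the general inequality (\ref{e03}), bounding the two correction terms in the same style used for $\PR\Omega_n^+(q)$ in Proposition \ref{PO+}. Concretely, I need (a) an upper bound on $i_2\bigl(\cup_{M\in\mathcal{M}(x)}M\bigr)$ and (b) a uniform upper bound on $i_2(\C_G(\alpha))$ for $\alpha\in\Inv(x)$; the rest is arithmetic using the values of $i(G)$ and $u(G)$ in Table \ref{IU}.

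For (a), since $r\in\ppd(2,nf)$, $x$ is a ppd-element of degree exactly $n$ acting on the natural module for $\rmO_n^-(q)$, so Aschbacher's theorem combined with \cite[Tables 6, 10]{King2017} and \cite[Table 2]{Xia20202} limits the candidate maximal overgroups of $x$ to a short list: the extension field subgroups of type $\rmO_{n/2}^-(q^2).\Cy_2$ and, when $n/2$ is even, $\GU_{n/2}(q).\Cy_2$, together with a small number of subgroups from class $\mathcal{S}$ that are easy to rule out or count. For each such type I would record the triple $(c_M,m_M,i_M)$---namely the number of $G$-conjugacy classes, an upper bound on $|\N_G(\langle x\rangle)|/|\N_M(\langle x\rangle)|$ extracted from \cite[Corollary 6.2, Proposition 6.4]{King2017}, and an upper bound on $i_2(M)$ obtained from \cite[Proposition 5.1]{King2017}, \cite[Theorem 4.3]{Fulman2017}, or \cite[Table 5]{Burness2018} as appropriate. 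Summing $c_M m_M i_M$ over the finitely many types and dividing by $i(G)$ yields the first subtracted fraction in \eqref{eq:PO-}.

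For (b), if $\alpha\in\Aut(G)\setminus\PGL(V)$ then $\alpha$ is (conjugate to) a field automorphism and \cite[Lemma 4.1]{Xia20202} gives $i_2(\C_G(\alpha))<3q^{n^2/8}$, which fits well within the target. If instead $\alpha\in\Aut(G)\cap\PGL(V)$, then \cite[Table 13]{Xia20202} places $\C_G(\alpha)$ inside a maximal parabolic $M=\PR_m=QL$ of $G$ with $L\lesssim\GL_m(q)\times\Om_{n-2m}^-(q)$ for some $m\leq n/2-1$, and I mirror the argument of Lemma \ref{ico+}, applying the identity
\[
i_2(M)=i_2(Q)+\sum_{g\in\I_2(L)}|\C^*_Q(g)|,
\]
enumerating the involution classes of $L$ via their Suzuki-form representatives $j_\ell$, invoking the bounds on $|\C^*_Q(g)|$ from the proof of \cite[Lemma 5.12]{Liebeck1996}, and computing the class lengths $|g^L|$ via \eqref{cis}. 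Here the assumption $f\geq 2$ is used to keep $\Om_{n-2m}^-(q)$ small enough (and in particular nontrivial only in the expected way) so that the orthogonal factor in $L$ does not swamp the $\GL_m(q)$ contribution.

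The main obstacle will be the bookkeeping in (b) for $G=\PR\Omega_{12}^-(q)$, where several values of $m$ may in principle arise, the Levi factor carries a genuine orthogonal component $\Om_{12-2m}^-(q)$, and one must track the involution classes $j_\ell$ in both factors of $L$ simultaneously while ensuring the resulting bound on $i_2(\C_G(\alpha))$ is small enough that $u(G)\,i_2(\C_G(\alpha))/i(G)\leq 490q^{-8}$. Once the two ingredients are in place, plugging them into \eqref{e03} and reading off the exponents against the values of $i(G)$ and $u(G)$ from Table \ref{IU} yields \eqref{eq:PO-}; positivity of $P(x)$ under the stated conditions on $q$ and the limit $P(x)\to 1$ as $q\to\infty$ then follow by direct inspection of the right-hand sides.
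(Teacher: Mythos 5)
Your overall strategy is exactly the paper's: apply \eqref{e03}, bound $i_2\bigl(\cup_{M\in\mathcal{M}(x)}M\bigr)$ by summing $c_M m_M i_M$ over the types of maximal overgroups of $x$ read off from King's tables, bound $i_2(\C_G(\alpha))$ by locating $\C_G(\alpha)$ in a maximal parabolic and using the Liebeck--Shalev identity $i_2(M)=i_2(Q)+\sum_{g\in\I_2(L)}|\C^*_Q(g)|$, and finish with the constants from Table \ref{IU}. However, your sketch contains concrete errors that would derail the computation if carried out as written. First, the parity condition for the unitary subgroup is reversed: $\GU_{n/2}(q).\Cy_2$ contains an element of order $r\in\ppd(2,nf)$ precisely when $n/2$ is \emph{odd} (one needs $r\mid q^{n/2}-(-1)^{n/2}$, and $r\mid q^{n/2}+1$ with $\gcd(q^{n/2}-1,q^{n/2}+1)=1$), so this type occurs for $n=10$ and not for $n=8,12$. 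Taking your statement at face value you would omit $\GU_5(q).\Cy_2$ from the $n=10$ count, undercounting the union, and add spurious types for $n=8,12$. Second, your list of extension field subgroups stops at degree~$2$; for $n=12$ the paper's Table \ref{O-} also includes $\rmO_4^{-}(q^3).\Cy_3$, which must be counted.

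On part (b): for minus-type orthogonal groups the case $\alpha\in\Aut(G)\setminus\PGL(V)$ is actually vacuous (the paper cites \cite[Proposition 3.5.25 (ii)]{Burness2016} to show no involutory automorphism outside $\PGL(V)$ inverts $x$), so your field-automorphism bound, while numerically harmless, addresses an empty case. More importantly, the relevant table for minus type is \cite[Table 14]{Xia20202}, not Table 13, and the paper devotes Remark \ref{rem:errors} to correcting errors in that table: the parabolic containing $\C_G(\alpha)$ is $\PR_{n/2-1}$ rather than $\PR_{n/2}$. Your ``$m\leq n/2-1$'' happens to agree with the corrected statement (the Witt index of a minus-type form is $n/2-1$), but you should be aware that the cited source as it stands does not deliver this and needs the correction. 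Finally, your explanation of the hypothesis $f\geq 2$ (controlling $\Om_{n-2m}^{-}(q)$ in the Levi factor) is not the role it plays; the Levi of $\PR_{n/2-1}$ has orthogonal part $\Om_2^{-}(q)\cong C_{q+1}$ regardless, and the restriction on $q$ enters only through the final numerical inequalities and the subgroup analysis at $q=2$.
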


In what follows we will use the notation in Proposition \ref{PO-}. Based on \cite[Tables 6, 10]{King2017}, \cite[Table 2]{Xia20202} and \cite[Proposition 2.3]{King2017}, Table \ref{O-} gives the number $c_M$ of $G$-conjugacy classes of each type, an upper bound $m_M$ on $|\N_G(\langle x \rangle)|/|\N_M(\langle x \rangle)|$, and an upper bound $i_M$ on $i_2(M)$, for every maximal subgroup $M$ of $G$ containing $x$. By this table and \cite[Corollary 6.2]{King2017}, we have
\begin{equation*}
i_2\left(\cup_{M\in\mathcal{M}(x)}M\right)\leq \left\{\begin{array}{ll}
3q^{8} &\text{ for } n=8 , \\[0.3cm]
6q^{15} &\text{ for } n=10 , \\[0.3cm]
4q^{18} &\text{ for } n=12 .
\end{array}\right.
\end{equation*}
Combining this with the value of $i(G)$ in Table \ref{IU}, we obtain
\begin{equation}\label{go1-}
\frac{i_2\left(\cup_{M\in\mathcal{M}(x)}M\right)}{i(G)}\leq \left\{\begin{array}{ll}
6q^{-8} &\text{ for } n=8 , \\[0.3cm]
12q^{-9} &\text{ for } n=10 , \\[0.3cm]
8q^{-18} &\text{ for } n=12 .
\end{array}\right.
\end{equation}

\begin{table}[h]
\centering
		\begin{tabular}{l|l|l|l|l}
			\hline
			$G$ & Type of $M$ & $c_M$ & $m_M$ & $i_M$\\ \hline
            \multirow{1}*{$\PR\Omega_{8}^{-}(q)$}& $\rmO_4^{-}(q^2).\Cy_2$ & $1$ & $1$ & $3q^{8}$ \\ \hline
            \multirow{3}*{$\PR\Omega_{10}^{-}(q)$}& $\rmO_5^{-}(q^2).\Cy_2$ & $1$ & $1$ & $3q^{25/2}$ \\
            ~ & $\GU_5(q).\Cy_2$ & $1$ & $1$ & $5q^{15}$ \\
            \hline
            \multirow{3}*{$\PR\Omega_{12}^{-}(q)$}& $\rmO_6^{-}(q^2).\Cy_2$ & $1$ & $1$ & $3q^{18}$ \\
            ~ & $\rmO_4^{-}(q^3).\Cy_3$ & $1$ & $1$ & $3q^{12}$ \\
            \hline
		\end{tabular}
\caption{The tuple $(M, c_M, m_M, i_M)$ for some $\PR\Omega_{n}^{-}(q)$ }
\label{O-}
\end{table}

\begin{remark}
\label{rem:errors}
For $G=\PR\Omega_{n}^{-}(q)$ with $n\geq 8$ even, there are errors in \cite[Table 14]{Xia20202} when considering a maximal subgroup $M$ containing $\C_G(\alpha)$ for an automorphism $\alpha\in \Inv(x)$, where $x$ is a fixed element of $G$ with order $r\in \ppd (2,nf)$. In fact, $\PR_{n/2-1}$ instead of $\PR_{n/2}$ is a possible type, and subgroups of type $\rmO_{n/2}^{+}(q)\times\rmO_{n/2}^{-}(q)$ should be considered when $q$ is odd and $n/2$ is even. (Note that $\PR_{n/2-1}$ stands for the parabolic subgroups which stabilize some subspace of dimension $n/2-1$.) In the first case, we have
\[
|G:M|=\frac{q^{\frac{n}{2}}+1}{q+1}\prod_{i=1}^{n/2-1}(q^{i}+1)>q^{\frac{n^2}{8}-\frac{n}{4}}.
\]
In the second case, we have
\[
|G:M|=q^{\frac{n^2}{8}+1}\prod_{i=1}^{n/4}\frac{(q^{\frac{n}{2}+2i}-1)}{q^{2i}-1}>q^{\frac{n^2}{8}-\frac{n}{4}}.
\]
Thus \cite[Lemma 4.2]{Xia20202} and the main results in \cite{Xia20202} are still valid despite the errors in \cite[Table 14]{Xia20202}.
\end{remark}

Let $\alpha\in \Inv(x)$. Note that, by \cite[Proposition 3.5.25 (ii)]{Burness2016}, there is no such an involutory automorphism outside $\PGL(V)$. By \cite[Table 14]{Xia20202} and Remark \ref{rem:errors}, we know that $\C_G(\alpha)$ is contained in a maximal parabolic subgroup $M=\PR_{n/2-1}$ of $G$. Similar to Lemma \ref{ico+}, we can prove the following lemma.

\begin{lemma}
If $\alpha\in \Inv(x)$, then
\begin{equation*}
i_2(\C_G(\alpha)) \leq \left\{\begin{array}{ll}
3q^{9} &\text{ for } n=8, \\[0.3cm]
3q^{15} &\text{ for } n=10, \\[0.3cm]
5q^{22} &\text{ for } n=12.
\end{array}\right.
\end{equation*}
\end{lemma}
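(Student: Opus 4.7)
The plan is to adapt the Levi-decomposition argument of Lemma \ref{ico+} to the minus-type setting. Since $\alpha\in \Inv(x)$ lies in $\PGL(V)$ by \cite[Proposition 3.5.25 (ii)]{Burness2016}, and $\C_G(\alpha)\leq M=\PR_{n/2-1}$ by \cite[Table 14]{Xia20202} together with Remark \ref{rem:errors}, it suffices to bound $i_2(M)$. Writing $M=QL$ with $m=n/2-1$ and $L\lesssim \GL_m(q)\times \Om_2^-(q)$, the dimension formula for the unipotent radical of a maximal parabolic in an orthogonal group yields $|Q|=q^{m(n-2m)+m(m-1)/2}$, so $|Q|=q^9,q^{14},q^{20}$ for $n=8,10,12$ respectively, exactly as in the corresponding $\Om_n^+$ cases. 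The critical observation is that since $q$ is even, $\Om_2^-(q)$ is cyclic of odd order $q+1$ and hence contains no involutions; consequently every involution of $L$ has trivial component on the $\Om_2^-$ factor, so the relevant conjugacy-class representatives are precisely the Suzuki forms $j_r(m)$ (for $1\leq r\leq m/2$) embedded in $L$ by the same block matrices $x_r$ used in Lemma \ref{ico+}, with $I_2$ in the middle block.

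Next I would invoke the identity
\[
i_2(M) = i_2(Q) + \sum_{g \in \I_2(L)} |\C^*_Q(g)|,
\]
where $\C^*_Q(g)=\{u\in Q:gug=u^{-1}\}$. Because $g$ is trivial on the anisotropic component and its conjugation action on $Q$ is governed purely by the $\GL_m(q)$-action, the bounds on $|\C^*_Q(g)|$ produced in the proof of \cite[Lemma 5.12]{Liebeck1996} coincide numerically with those used in Lemma \ref{ico+}, namely $q^5$ for the single class when $n=8$, $q^9$ and $q^6$ for the two classes when $n=10$, and $q^{14}$ and $q^{10}$ for the two classes when $n=12$. The conjugacy-class lengths in $\GL_m(q)$ are bounded via \eqref{cis} exactly as before. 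Summing and using $i_2(Q)\leq |Q|-1$ gives the required estimates $i_2(M)\leq 3q^9$, $3q^{15}$ and $5q^{22}$ for $n=8,10,12$ respectively, and the lemma follows since $\C_G(\alpha)\leq M$.

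The only delicate point I anticipate is justifying that the $|\C^*_Q(g)|$ estimates of \cite[Lemma 5.12]{Liebeck1996} transfer verbatim from the $\Om_n^+$ to the $\Om_n^-$ version of $\PR_{n/2-1}$. At the root-system level this reduces to the observation that the unipotent radicals of these two parabolics share the same set of positive roots as $\GL_m(q)$-modules (the $\mathrm{D}_{n/2}$ and ${}^{2}\mathrm{D}_{n/2}$ diagrams differ only in a Frobenius twist on the fork that does not alter the $\GL_m$-structure of $Q$), so the action of an involution supported on the $\GL_m(q)$-factor is the same in either case. Once this is verified by writing out the relevant commutator relations, the counting above completes the proof.
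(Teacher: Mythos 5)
Your proposal is correct and is essentially the argument the paper intends: the paper's own proof of this lemma is the single remark that it is ``similar to Lemma~\ref{ico+}'', and your computation carries this out by reproducing the $m=n/2-1$ subcases of that lemma, with $\Om_2^-(q)\cong C_{q+1}$ (odd order, hence involution-free) playing the role of $\Om_2^+(q)\cong C_{q-1}$, the same $|Q|$, the same Suzuki-form class representatives and class-length bounds in $\GL_m(q)$, and the same $|\C^*_Q(g)|$ estimates from \cite[Lemma 5.12]{Liebeck1996} (whose proof treats $\PR_m$ in both types $\rmO_n^{\pm}$ uniformly, which settles the ``delicate point'' you flag). The resulting bounds $3q^{9}$, $3q^{15}$ and $5q^{22}$ match the statement.
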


This together with the values of $i(G)$ and $u(G)$ in Table \ref{IU} implies that
\begin{equation*}
\frac{\sum_{\alpha\in \Inv(x)}i_2(\C_G(\alpha))}{i(G)} \leq \left\{\begin{array}{ll}
204q^{-3}&\text{ for } n=8, \\[0.3cm]
252q^{-4} &\text{ for } n=10, \\[0.3cm]
490q^{-8} &\text{ for } n=12.
\end{array}\right.
\end{equation*}
Combining this with (\ref{e03}) and (\ref{go1-}), we obtain \eqref{eq:PO-} and thus complete the proof of Proposition \ref{PO-}.

\medskip

\noindent \textbf{Acknowledgements}~~S. Zheng was supported by the Melbourne Research Scholarship.


\begin{thebibliography}{99}

\bibitem{Aschbacher1976}
M. Aschbacher and G.M. Seitz,
{Involutions in Chevalley groups over fields of even order},
\textit{Nagoya Math. J.} 63 (1976) 1--91.

\bibitem{Blackburn1982}
N. Blackburn and B. Huppert,
\textit{Finite Groups II},
{Springer-Verlag Berlin Heidelberg}, Berlin, 1982.

\bibitem{Bray2013}
J.N. Bray, D.F. Holt and C.M. Roney-Dougal,
\textit{The Maximal Subgroups of the Low-dimensional Finite Classical Groups},
{Cambridge University Press}, Cambridge, 2013.

\bibitem{Breda2021}
A. Breda d'Azevedo and D.A. Catalano,
{Strong map-symmetry of $\SL(3,K)$ and $\PSL(3, K)$ for every finite field $K$},
\textit{J. Algebra Appl.} 20 (04) (2021) 2150048.

\bibitem{Burness2016}
T.C. Burness and M. Giudici,
\textit{Classical Groups, Derangements and Primes, Austral. Math. Soc. Lect. Ser. 25},
{Cambridge University Press}, Cambridge, 2016.

\bibitem{Burness2018}
T.C. Burness and A.R. Thomas,
{On the involution fixity of exceptional groups of Lie type},
\textit{Internat. J. Algebra Comput.} 28 (2018) 411--466.

\bibitem{Chao1964}
C.Y. Chao,
{On a theorem of Sabidussi},
\textit{Proc. Amer. Math. Soc.} 15 (1964) 291--292.

\bibitem{Fang2002}
X.G. Fang, C.H. Li, J. Wang and M.Y. Xu,
{On cubic Cayley graphs of finite simple groups},
\textit{Discrete Math.} 244 (1--3) (2002) 67--75.

\bibitem{Fulman2017}
J. Fulman, G. Robert and S. Dennis,
{Asymptotics of the number of involutions in finite classical groups},
\textit{J. Group Theory} 20 (2017) 871--902.

\bibitem{Godsil1981}
C.D. Godsil,
{GRRs for nonsolvable groups},
{Algebraic Methods in Graph Theory (Szeged, 1978)}, \textit{Colloq. Math. Soc. Jannos Bolyai} 25 (1981) 221--239.

\bibitem{Godsil1983}
C.D. Godsil,
{The automorphism groups of some cubic Cayley graphs},
\textit{European J. Combin.} 4 (1) (1983) 25--32.

\bibitem{Guest2015}
S. Guest, J. Morris, C.E. Praeger and P. Spiga,
{On the maximum orders of elements of finite almost simple groups and primitive permutation groups},
\textit{Trans. Amer. Math. Soc.} 367 (11) (2015) 7665--7694.

\bibitem{Hestenes1970}
M. Hestenes,
{Singer Groups},
\textit{Can. J. Math.} 22 (3) (1970) 492--513.

\bibitem{King2017}
C.S.H. King,
{Generation of finite simple groups by an involution and an element of prime order},
\textit{J. Algebra} 478 (2017) 153--173.

\bibitem{Leemans2017}
D. Leemans and M.W. Liebeck,
{Chiral polyhedra and finite simple groups},
\textit{Bull. London Math. Soc.} 49 (2017) 581--592.

\bibitem{Li2021}
J.J. Li, B. Xia, X.Q. Zhang and S. Zheng,
{Cubic graphical regular representations of $\PSU_{3}(q)$},
{arXiv preprint} (2022) arXiv: 2201.04307.


\bibitem{Liebeck1996}
M. Liebeck and A. Shalev,
{Classical groups, probabilistic methods, and the (2,3)-generation problem},
\textit{Ann. of Math.} (2) 144 (1) (1996) 77--125.

\bibitem{Nowitz1968}
L.A. Nowitz,
{On the non-existence of graphs with transitive generalized dicyclic groups},
\textit{J. Combin. Theory Ser. A} 4 (1) (1968) 49--51.

\bibitem{Praeger1999}
C. Praeger,
{Primitive prime divisor elements in finite classical groups},
\textit{Groups St Andrews 1997 in Bath, II, London Math. Soc. Lecture Note Ser.} 261 (1999) 605--623.

\bibitem{Sabidussi1964}
G. Sabidussi,
{Vertex-transitive Graphs},
\textit{Monatsh. Math.} 68 (1964) 426--438.

\bibitem{Spiga2018}
P. Spiga,
{Cubic graphical regular representations of finite non-abelian simple groups},
\textit{Commun. Algebra} 46 (6) (2018) 2440--2450.

\bibitem{Watkins1971}
M.E. Watkins,
{On the action of non-abelian groups on graphs},
\textit{J. Combin. Theory Ser. B} 11 (2) (1971) 95--104.

\bibitem{Xia2020}
B. Xia,
{Cubic graphical regular representations of $\PSL_{3}(q)$},
\textit{Discrete Math.} 343 (1) (2020) 111646.

\bibitem{Xia20202}
B. Xia,
{On cubic graphical regular representations of finite simple groups},
\textit{J. Combin. Theory Ser. B} 141 (2020) 1--30.


\bibitem{Xia2016}
B. Xia and T. Fang,
{Cubic graphical regular representations of $\PSL_{2}(q)$},
\textit{Discrete Math.} 339 (8) (2016) 2051--2055.


\bibitem{Xu2004}
M. Xu and S. Xu,
{Symmetry properties of Cayley graphs of small valencies on the alternating group $A_5$},
\textit{Sci. China Ser. A-Math.} 47 (4) (2004) 593--604.

\end{thebibliography}
\end{document}